\documentclass[11pt]{amsart}
\usepackage{amscd}
\usepackage{amsmath}
\usepackage{amsxtra}
\usepackage{amsfonts}
\usepackage{amssymb}
\usepackage{setspace}
\usepackage{color}

\oddsidemargin  0.0in
	\evensidemargin 0.0in
	\textwidth      6.5in
	\headheight     0.0in
	\topmargin      0.0in
 
\newtheorem{theorem}{Theorem}[section]

\newtheorem{lemma}[theorem]{Lemma}
\newtheorem{proposition}[theorem]{Proposition}

\theoremstyle{definition}
\newtheorem{definition}[theorem]{Definition}
\newtheorem{remark}[theorem]{Remark}

\newtheorem{example}[theorem]{Example}
\theoremstyle{remark}

\renewcommand{\theclaim}{\textup{\theclaim}}

\newtheorem*{acknowledgements}{Acknowledgements}

\numberwithin{equation}{section}

\def\openone

{\mathchoice

{\hbox{\upshape \small1\kern-3.3pt\normalsize1}}

{\hbox{\upshape \small1\kern-3.3pt\normalsize1}}

{\hbox{\upshape \tiny1\kern-2.3pt\SMALL1}}

{\hbox{\upshape \Tiny1\kern-2pt\tiny1}}}

\makeatletter

\newbox\ipbox

\newcommand{\ip}[2]{\left\langle #1\, , \,#2\right\rangle}
\newcommand{\diracb}[1]{\left\langle #1\mathrel{\mathchoice

{\setbox\ipbox=\hbox{$\displaystyle \left\langle\mathstrut
#1\right.$}

\vrule height\ht\ipbox width0.25pt depth\dp\ipbox}

{\setbox\ipbox=\hbox{$\textstyle \left\langle\mathstrut
#1\right.$}

\vrule height\ht\ipbox width0.25pt depth\dp\ipbox}

{\setbox\ipbox=\hbox{$\scriptstyle \left\langle\mathstrut
#1\right.$}

\vrule height\ht\ipbox width0.25pt depth\dp\ipbox}

{\setbox\ipbox=\hbox{$\scriptscriptstyle \left\langle\mathstrut
#1\right.$}

\vrule height\ht\ipbox width0.25pt depth\dp\ipbox}

}\right. }

\newcommand{\dirack}[1]{\left. \mathrel{\mathchoice

{\setbox\ipbox=\hbox{$\displaystyle \left.\mathstrut
#1\right\rangle$}

\vrule height\ht\ipbox width0.25pt depth\dp\ipbox}

{\setbox\ipbox=\hbox{$\textstyle \left.\mathstrut
#1\right\rangle$}

\vrule height\ht\ipbox width0.25pt depth\dp\ipbox}

{\setbox\ipbox=\hbox{$\scriptstyle \left.\mathstrut
#1\right\rangle$}

\vrule height\ht\ipbox width0.25pt depth\dp\ipbox}

{\setbox\ipbox=\hbox{$\scriptscriptstyle \left.\mathstrut
#1\right\rangle$}

\vrule height\ht\ipbox width0.25pt depth\dp\ipbox}

} #1\right\rangle}

\newcommand{\cj}[1]{\overline{#1}}

\newcommand{\bz}{\mathbb{Z}}

\newcommand{\br}{\mathbb{R}}
\newcommand{\bc}{\mathbb{C}}

\def\blfootnote{\xdef\@thefnmark{}\@footnotetext}


\renewcommand{\mod}{\operatorname{mod}}

\hyphenation{wave-lets}\hyphenation{ in-fi-nite}\hyphenation{ con-vo-lu-tion}

\input xy
\xyoption{all}
\usepackage{amssymb}





\def\-{^{-1}}








\begin{document}
  \onehalfspacing

\title[Existence and exactness of exponential Riesz sequences and frames for fractal measures]{Existence and exactness of exponential Riesz sequences and frames for fractal measures}

\author{Dorin Ervin Dutkay}
\address{[Dorin Ervin Dutkay] University of Central Florida\\
	Department of Mathematics\\
	4000 Central Florida Blvd.\\
	P.O. Box 161364\\
	Orlando, FL 32816-1364\\
U.S.A.\\} \email{Dorin.Dutkay@ucf.edu}

\author{Shahram Emami}

\address{[Shahram Emami] Department of Mathematics, San Francisco State University,
1600 Holloway Avenue, San Francisco, CA 94132.}

 \email{semami1@mail.sfsu.edu,semami@hotmail.com}
\author{Chun-Kit Lai}

\address{[Chun-Kit Lai] Department of Mathematics, San Francisco State University,
1600 Holloway Avenue, San Francisco, CA 94132.}

 \email{cklai@sfsu.edu}

\thanks{}
\subjclass[2010]{Primary 42B05, 42A85, 28A25, 28A80.}
\keywords{Exponential frame, Fourier series, Kadison-Singer, Riesz sequence}

\begin{abstract}
We study the construction of exponential frames and Riesz sequences for a class of fractal measures on $\br^d$ generated by infinite convolution of discrete measures using the idea of frame towers and Riesz-sequence towers. The exactness and overcompleteness of the constructed exponential frame or Riesz sequence is completely classified in terms of the cardinality at each level of the tower.    Using a version of the solution of the Kadison-Singer problem, known as the $R_{\epsilon}$-conjecture, we show that all these measures contain exponential Riesz sequences of infinite cardinality. Furthermore, when the measure is the middle-third Cantor measure, or more generally for self-similar measures with no-overlap condition, there are always exponential Riesz sequences of maximal possible Beurling dimension.  
\end{abstract}

\maketitle \tableofcontents

\section{Introduction}

A finite Borel  measure $\mu$ is called a {\it spectral measure} if there exists a set $\Lambda$ such that the family of exponential functions $E(\Lambda): = \{e^{2\pi i \langle\lambda, x\rangle} :\lambda\in\Lambda\}$ is an orthonormal basis for $L^2(\mu)$. A set $\Omega$ is called a {\it spectral set} if $\chi_{\Omega}dx$ is a spectral measure. If the family of exponentials $E(\Lambda)$ forms a frame/Riesz sequence for $L^2(\mu)$ (See Section 2 for the definition of frames and Riesz sequences), we say that the measure $\mu$ is frame-spectral/RS-spectral. If $E(\Lambda)$ is both a frame and a Riesz sequence, then $E(\Lambda)$ is a Riesz basis for $L^2(\mu)$ and $\mu$ is called {\it Riesz-spectral}.

\medskip

The study of spectral measures was initiated in \cite{Fug74} motivated by analysis of commuting self-adjoint extensions of partial differential operators. Fuglede asked which subsets of the Euclidean space are spectral sets and proposed his famous conjecture which states that these sets are precisely those that tile the Euclidean space by translations. In \cite{JP98}, Jorgensen and Pedersen widened the scope of Fuglede's question and asked which Borel measures on $\br^d$ admit orthogonal Fourier series. They constructed the first example of a {\it singular, non-atomic} spectral measure. It is based on a Cantor-type construction, where the unit interval is divided into four pieces and the second and fourth piece are discarded. Many more classes of examples of singular spectral measures have been constructed since, see, e.g, \cite[and the references therein]{Str00,LaWa02,DJ06,DHL18}. Strichartz proved in \cite{MR2279556} that, in some cases, the Fourier series associated to such singular spectral measures have much better convergence properties than their classical counterparts, see also \cite{MR3152727}. 

\medskip

In their original paper, Jorgensen and Pedersen also proved that the more familiar middle-third Cantor set, with the measure $\mu$ being the standard Hausdorff measure, is not a spectral measure, so it does not admit orthogonal bases of exponential functions. This motivated Strichartz \cite{Str00}, to ask if this middle-third Cantor measure $\mu$ can be frame-spectral or even Riesz-spectral.  Very little progress towards an answer for this question has been made since then. In \cite{MR2826404}, some Bessel sequences of exponential functions were constructed with positive Beurling dimension for $\mu$. In \cite{LW17}, the first examples of frame-spectral fractal measures with only finitely many mutually orthogonal exponential were constructed.

\medskip

In this paper, we generalize the study in \cite{LW17} on $\br^d$ and consider the frame-spectrality and the RS-spectrality of the  measures obtained as infinite convolutions of atomic measures,  of the form
\begin{equation}\label{measure_conv}
\mu = \mu(\{R_j,B_j\}) = \delta_{{\bf R}_1^{-1}B_1}\ast\delta_{{\bf R}_2^{-1}B_2}\ast...\ast\delta_{{\bf R}_n^{-1}B_n}\ast....,
\end{equation}
where ${\bf R}_j=R_jR_{j-1}\dots R_1$, with $R_i$ being some  expanding matrices with integer entries in $\br^d$, $B_j$ are some finite sets of digits in ${\mathbb Z}^d$, and for a finite subset $A$ of $\br^d$, 
$$\delta_A=\frac{1}{\# A}\sum_{a\in A}\delta_a,$$
where $\delta_a$ is the Dirac measure at the point $a$. This class of measures contains self-affine measures defined by affine iterated function systems as well as the middle-third Cantor measure. 


\medskip

\noindent{\bf Main Result and organization of the paper.} The main tool for constructing frames and Riesz sequences for our class of measures is based on the {\it frame/Riesz sequence towers} (Definition \ref{def2.1}). Originally, the idea of the tower construction is due to Strichartz \cite{Str00}, who considered {\it compatible towers} for constructing orthogonal exponential basis. Special cases of the tower constructions for frames were considered previously by the authors \cite{LW17,DHL18}.

\smallskip

In Section 2, we will present the most general setting for towers to generate frames and, the first examples of Riesz sequences. Basically, we will need to have a frame/Riesz sequence condition at each finite dimensional level $\{{\bf R}_j, B_j\}$ and then concatenate, or convolute these sets to obtain frames or Riesz sequences for the resulting measure. In Section 3, we show how these towers generate frames and Riesz sequences of exponential functions for the infinite convolution measure  in (\ref{measure_conv}) (Theorem \ref{thm1.3}). We notice that, similar to all previous results in literature, a tail-term estimate (See (\ref{eq1.5})) is required for the infinite convolution measure to have a frame. However, no such estimate is required for exponential Riesz sequences.

\smallskip

 In Section 4, we investigate the exactness and completeness of the resulted frame and Riesz sequences (Theorem \ref{theorem2.1} and Theorem \ref{th4.4}) generated by the frame/Riesz sequence towers. In brief, these towers show a rigid structure. Under the tail-estimate (\ref{eq1.5}), we get a Riesz basis of exponential functions if and only if we have a square matrix of finite frames at all levels. In particular, this shows that all frame-spectral measures constructed by the third-named author and Wang in \cite{LW17} are indeed Riesz-spectral.

\smallskip

The recent solution of the Kadison-Singer problem by \cite{MSS} provides an elegant proof that it is possible to partition a highly redundant tight frame into two frames with roughly the same frame bounds. This has led to important advances in frame theory. A recent survey of the Kadison-Singer problems and its equivalent statements can be found in \cite{B18}. In Section 5, using one of the consequences of the Kadison-Singer theorem, we show that all infinite convolution measures (\ref{measure_conv}) admit  Riesz sequences of exponentials of infinite cardinality (Theorem \ref{theorem_Riesz}). In the more particular case of self-similar measures, we show that there exist Riesz sequences of maximal Beurling dimension (Theorem \ref{th5.7}). 

\smallskip

Theorem \ref{th5.7} tells us that the middle-third Cantor measure has an exponential Riesz sequence of Beurling dimension $\log2/\log3.$ All spectral self-similar measures we know admit a spectrum of maximal Beurling dimension.  Our result here  leads to some evidence that an exponential  Riesz basis may exist for the middle-third Cantor measure and that would lead to a solution to Strichartz's question. On the other hand, in contrast to the existence of exponential Riesz sequences of maximal Beurling  dimension, it is also known that one can also construct exponential orthonormal basis of zero Beurling dimension for some spectral measures \cite{MR3055992}. This tells us that an exponential frame of the middle-third Cantor measure may exist even if it does not have maximal Beurling dimension. 

\smallskip


\section{Frame and Riesz sequence towers}

Recall that  a sequence of vectors $\{f_k\}_{k=1}^{\infty}$ is called a {\it frame}  for a Hilbert space $H$ if there exists $C,D>0$ such that, for all $x\in H$,
$$
C \|x\|^2 \le \sum_{k=1}^{\infty} |\langle x,f_k\rangle|^2 \le D\|x\|^2.
$$
A sequence of vectors $\{f_k\}_{k=1}^{\infty}$ is called a {\it Riesz sequence}  for a Hilbert space $H$ if there exists $C,D>0$ such that  for any finite scalar sequence $(a_k)$ (i.e., there exists $N$ such that $a_k = 0$ for all $k>N$),
$$
C \sum_{k\le N} |a_{k}|^2\le \left\|\sum_{k\le N} a_kf_k\right\|^2 \le D\sum_{k\le N} |a_{k}|^2.
$$
(see \cite{Chr03} for a comprehensive theory of frames and Riesz sequences). Our goal is to build a frame/Riesz sequence for Cantor-type fractal measures defined by rescaling. This section will be devoted to studying the finite dimensional preparation of such construction. A matrix $R$ is called {\it expanding} if all of its eigenvalues have moduli strictly greater than 1. Throughout the paper, $A^{\mathtt T}$ denote the transpose of $A$.

\subsection{Finite dimensional preliminaries}

\begin{definition}\label{def2.1}
Let $R$ be an $d\times d$ expanding matrix of integer entries and let $B,L$ be a finite subset of ${\mathbb Z}^d$ and $0\in B\cap L$ (by a simple translation, there is no loss of generality to assume this). Define the vector
$$
{\bf e}_{R,\lambda} =  \frac{1}{\sqrt{\#B}}\left( e^{2\pi i \langle R^{-1}b,\lambda\rangle}\right)_{b\in B}^{\mathtt T} \in {\mathbb C}^{\#B}
$$
We say that $(R,B,L)$ forms a {\it frame triple with bounds $C\le D$} if
$$
C\|{\bf x}\|^2 \le \sum_{\lambda\in L} |\langle {\bf x},{\bf e}_{R,\lambda}\rangle|^2\le D\|{\bf x}\|^2, \ \forall \ {\bf x}\in {\mathbb C}^{\#B}.
$$
We say that $(R,B,L)$ forms a {\it Riesz sequence triple with bounds $C\le D$} if
$$
C\sum_{\lambda\in L}|a_{\lambda}|^2 \le \left\| \sum_{\lambda\in L} a_{\lambda} {\bf e}_{R,\lambda}\right\|^2\le D\sum_{\lambda\in L}|a_{\lambda}|^2, \ \forall (a_{\lambda})\in {\mathbb C}^{\#L}.
$$

\medskip

For $j=1,2,...$, let $0<C_j\le D_j<\infty$ be a sequence of positive numbers such that $\prod_{j=1}^{\infty}C_j>0$ and $\prod_{j=1}^{\infty}D_j<\infty$. Let $R_j$ be a sequence of expanding integer matrices on ${\mathbb R}^d$ and $B_j,L_j$  are finite subsets of ${\mathbb Z}^d$. We say that $\{(R_j,B_j,L_j): j=1,2,...\}$ forms a {\it frame tower} (respectively  a {\it Riesz sequence tower}) with respect to the bounds $C_j,D_j$ if for each $j=1,2,...$, $(R_j,B_j,L_j)$ forms a frame triple (respectively a Riesz sequence triple) with bounds $C_j,D_j$.
\end{definition}

We notice that if $C = D = 1$ and $\#B = \#L$, then $\{{\bf e}_{R,\lambda}: \lambda\in\Lambda\}$ forms an orthonormal basis on ${\mathbb C}^{\#B}$. In this case $(R,B,L)$ is called the {\it Hadamard triple}, which is known to be the key condition for generating exponential orthonormal basis of fractal measures \cite{Str00,DHL18}.

\medskip

The following lemma establishes the duality relation between these two triples.
\begin{lemma}\label{lemma1}
$(R,B,L)$ forms a frame triple with bounds $C,D$ if and only if $(R^{\mathtt T}, L,B)$ forms a Riesz sequence triple with bounds $\left(\frac{\#B}{\#L}C,\frac{\#B}{\#L}D\right)$.
\end{lemma}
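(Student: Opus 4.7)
My plan is to encode both conditions as singular value bounds on the same underlying scalar matrix and then reduce the lemma to matching normalization constants. The driving observation is the adjoint identity $\langle R^{-1}b, \lambda\rangle = \langle b, (R^{\mathtt T})^{-1}\lambda\rangle$, which means that the scalar $e^{2\pi i \langle R^{-1}b, \lambda\rangle}$ simultaneously plays the role of (up to the respective normalizations) the $b$-entry of ${\bf e}_{R,\lambda}$ and the $\lambda$-entry of ${\bf e}_{R^{\mathtt T},b}$. Thus both families of vectors come from the rows versus columns of a single matrix, and the only asymmetry between them is the prefactor $1/\sqrt{\#B}$ versus $1/\sqrt{\#L}$, which is exactly what will produce the $\#B/\#L$ factor in the statement.

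First, I introduce the unnormalized matrix $U \in \mathbb{C}^{\#L \times \#B}$ with $U_{\lambda, b} = e^{2\pi i \langle R^{-1}b, \lambda\rangle}$. Then ${\bf e}_{R,\lambda}^{\mathtt T}$ is $\frac{1}{\sqrt{\#B}}$ times the $\lambda$-th row of $U$, while ${\bf e}_{R^{\mathtt T},b}$ is $\frac{1}{\sqrt{\#L}}$ times the $b$-th column of $U$. Second, I translate each side of the claim into a statement about $U$. Expanding the frame condition directly gives $\sum_{\lambda \in L} |\langle {\bf x}, {\bf e}_{R,\lambda}\rangle|^2 = \frac{1}{\#B}\|\overline{U}{\bf x}\|^2 = \frac{1}{\#B}\|U\overline{{\bf x}}\|^2$; since $\|\overline{{\bf x}}\| = \|{\bf x}\|$ and ${\bf x}$ ranges over $\mathbb{C}^{\#B}$, the frame condition with bounds $C,D$ becomes
$$
C\,(\#B)\|{\bf y}\|^2 \;\le\; \|U{\bf y}\|^2 \;\le\; D\,(\#B)\|{\bf y}\|^2 \qquad \forall\,{\bf y}\in \mathbb{C}^{\#B}.
$$
Similarly, $\sum_{b\in B} a_b {\bf e}_{R^{\mathtt T},b} = \frac{1}{\sqrt{\#L}}\,U{\bf a}$ with ${\bf a}=(a_b)\in \mathbb{C}^{\#B}$, so the Riesz sequence condition for $(R^{\mathtt T}, L, B)$ with bounds $C', D'$ is equivalent to
$$
C'\,(\#L)\|{\bf a}\|^2 \;\le\; \|U{\bf a}\|^2 \;\le\; D'\,(\#L)\|{\bf a}\|^2 \qquad \forall\,{\bf a}\in \mathbb{C}^{\#B}.
$$

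Third, matching these two equivalent inequalities forces $C'\#L = C\#B$ and $D'\#L = D\#B$, yielding the asserted scaling $(C', D') = \bigl(\tfrac{\#B}{\#L}C,\tfrac{\#B}{\#L}D\bigr)$. The argument is pure linear algebra; the only subtlety I anticipate is the bookkeeping of the complex conjugate produced by the inner-product expansion, which is immediately absorbed by the invariance $\|{\bf y}\|=\|\overline{{\bf y}}\|$. I therefore do not expect a genuine obstacle — the lemma is essentially the observation that a rectangular matrix and its conjugate transpose share the same nonzero singular values, repackaged to fit the conventions of Definition~\ref{def2.1}.
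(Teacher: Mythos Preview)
Your proof is correct and follows essentially the same approach as the paper: both arguments hinge on the adjoint identity $\langle R^{-1}b,\lambda\rangle = \langle (R^{\mathtt T})^{-1}\lambda,b\rangle$ to recognize that the frame sum $\sum_{\lambda\in L}|\langle {\bf x},{\bf e}_{R,\lambda}\rangle|^2$ and the Riesz-sequence norm $\|\sum_{b\in B} x_b\,{\bf e}_{R^{\mathtt T},b}\|^2$ differ only by the scalar $\#L/\#B$. The paper carries this out in a single displayed chain of equalities, whereas you package the same computation through the auxiliary matrix $U$, but the content is identical.
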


\begin{proof}
As $\langle  R^{-1}b,\lambda\rangle = \langle (R^{\mathtt T})^{-1}\lambda,b\rangle$, we have
$$
\sum_{\lambda\in L} |\langle {\bf x},{\bf e}_{R,\lambda}\rangle|^2 = \sum_{\lambda\in L} \left| \sum_{b\in B} x_b\frac{1}{\sqrt{\#B}} e^{-2\pi i \langle R^{-1}b,\lambda\rangle}\right|^2$$$$ = \sum_{\lambda\in L} \left| \sum_{b\in B} x_b\frac{1}{\sqrt{\#B}} e^{-2\pi i  \langle (R^{\mathtt T})^{-1}\lambda,b\rangle}\right|^2 = \frac{\#L}{\#B}\left\| \sum_{b\in B} x_b {\bf e}_{R^{\mathtt T},b}\right\|^2
$$
The lemma follows from this. 
\end{proof}

Given finite set of integers $B,L\subset{\mathbb Z}^d$ and an integral expanding matrix $R$, we define the $(\#L) \times (\#B)$ matrix
$$
{\mathcal F}_{L,B} = \frac{1}{\sqrt{\#B}} \left( e^{2\pi i \langle R^{-1}b,\lambda\rangle}\right)_{\lambda \in L, b\in B}
$$
(Rows are indexed by $L$ and columns are indexed by $B$). Then ${\bf e}_{R,\lambda}$, $\lambda\in L$ are all the row vectors in ${\mathcal F}_{L,B}$. With some simple linear algebra, we have the following proposition. Note that this proposition is well-known if $(R,B,L)$ forms a Hadamard triple \cite{LaWa02}. 

\begin{proposition}\label{Prop_distinct}
Suppose that $(R,B,L)$ forms a frame triple. Then each element in $B$ must be a  distinct representative in ${\mathbb Z}^d/R({\mathbb Z}^d)$. 
\end{proposition}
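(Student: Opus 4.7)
The plan is to prove the contrapositive: if two elements of $B$ coincide modulo $R(\mathbb{Z}^d)$, then the frame lower bound $C>0$ must fail. This is a one-step linear-algebra argument that exploits the integrality of $L\subset\mathbb{Z}^d$.

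Suppose, toward a contradiction, that there exist distinct $b_1, b_2 \in B$ with $b_1 - b_2 \in R(\mathbb{Z}^d)$, say $b_1 - b_2 = Rv$ for some $v \in \mathbb{Z}^d$. First I would observe that, for every $\lambda \in L \subset \mathbb{Z}^d$,
\[
\langle R^{-1}b_1, \lambda\rangle - \langle R^{-1}b_2, \lambda\rangle \;=\; \langle R^{-1}(b_1-b_2), \lambda\rangle \;=\; \langle v, \lambda\rangle \;\in\; \mathbb{Z},
\]
so that the $b_1$- and $b_2$-coordinates of ${\bf e}_{R,\lambda}$ agree for \emph{every} $\lambda \in L$.

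Next I would plug in the test vector ${\bf x} \in \mathbb{C}^{\#B}$ defined by $x_{b_1} = 1$, $x_{b_2} = -1$, and $x_b = 0$ otherwise. Using the computation above,
\[
\langle {\bf x}, {\bf e}_{R,\lambda}\rangle \;=\; \frac{1}{\sqrt{\#B}}\Bigl(e^{-2\pi i \langle R^{-1}b_1,\lambda\rangle} - e^{-2\pi i \langle R^{-1}b_2,\lambda\rangle}\Bigr) \;=\; 0
\]
for every $\lambda \in L$. Hence $\sum_{\lambda \in L}|\langle {\bf x},{\bf e}_{R,\lambda}\rangle|^2 = 0$, while $\|{\bf x}\|^2 = 2 > 0$. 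This contradicts the frame inequality $C\|{\bf x}\|^2 \le \sum_{\lambda\in L}|\langle {\bf x},{\bf e}_{R,\lambda}\rangle|^2$ with $C>0$, finishing the proof.

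There is no real obstacle here: the only subtle point is verifying that the integrality $\langle v,\lambda\rangle \in \mathbb{Z}$ actually uses both $L \subset \mathbb{Z}^d$ and $v \in \mathbb{Z}^d$, which is why the assertion is about representatives in $\mathbb{Z}^d/R(\mathbb{Z}^d)$ rather than in some finer quotient. In fact the same argument shows more: the rank of the matrix $\mathcal{F}_{L,B}$ is at most the number of distinct residues of $B$ modulo $R(\mathbb{Z}^d)$, so a frame triple already forces $\#B \le \#(\mathbb{Z}^d/R\mathbb{Z}^d) = |\det R|$, but only the distinctness statement is needed here.
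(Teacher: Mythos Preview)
Your proof is correct and is essentially the same as the paper's: both observe that if $b_1-b_2\in R(\mathbb Z^d)$ then the $b_1$- and $b_2$-columns of $\mathcal F_{L,B}$ coincide, and derive a contradiction with the frame lower bound. The paper phrases this as ``two identical columns force $\operatorname{rank}\mathcal F_{L,B}<\#B$'', while you explicitly exhibit the kernel vector $(1,-1,0,\dots,0)$ that witnesses the failure of the lower frame inequality; these are two formulations of the same linear-algebra fact.
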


\begin{proof}
Since $\{{\bf e}_{R,\lambda}:\lambda\in\Lambda\}$ forms a frame for ${\mathbb C}^{\#B}$ if $(R,B,L)$ forms a frame triple, we have that the vectors ${\bf e}_{R,\lambda}$ span ${\mathbb C}^{\#B}$. Hence, the rank of the matrix ${\mathcal F}_{L,B}=\#B = $ number of columns. However, if there exists $b,b'$ such that $b = b'+ Rk$ for some $k\in{\mathbb Z}^d$, then, for all $\lambda\in L$, 
$$
e^{2\pi i \langle R^{-1}b,\lambda\rangle} = e^{2\pi i \langle R^{-1}(b'+Rk),\lambda\rangle} = e^{2\pi i \langle R^{-1}b',\lambda\rangle}
$$
This means that ${\mathcal F}_{L,B}$ has two identical columns. Hence, the rank of ${\mathcal F}_{L,B}$ is strictly less than $\#B$, a contradiction. Hence,  each element in $B$ must be a  distinct representative in ${\mathbb Z}^d/R({\mathbb Z}^d)$, completing the proof.
\end{proof}

Using Lemma \ref{lemma1}, it also follows easily that if $(R,B,L)$ forms a Riesz sequence triple, then each element in $L$ must be a  distinct representative in ${\mathbb Z}^d/R^{\mathtt T}({\mathbb Z}^d)$.  However, it is not true that $B$ is from distinct representative in ${\mathbb Z}^d/R({\mathbb Z}^d)$ if $(R,B,L)$ forms a Riesz sequence triple, as we will see in Example \ref{example013}. Since we wish to construct frame-spectral measures from frame towers, we will from now on assume that {\it the elements in $B$ are distinct representatives in ${\mathbb Z}^d/R({\mathbb Z}^d)$}.

\smallskip

\begin{lemma}\label{lemma_tight}
Let $R$ be an $d\times d$ integral expanding matrix and let $B$ be a set containing some  distinct representatives in ${\mathbb Z}^d/R({\mathbb Z}^d)$. Suppose that $\overline{L}$ is a complete set of distinct representatives of ${\mathbb Z}^d/R^{\mathtt T}({\mathbb Z}^d)$. Then $(R,B,\overline{L})$ forms a (tight) frame triple with constant $C = D =  \frac{|\det (R)|}{\#B}$
\end{lemma}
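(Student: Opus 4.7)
The plan is to show that the $(\#B) \times (\#B)$ Gram-type matrix
$$
G := \mathcal{F}_{\overline{L},B}^* \mathcal{F}_{\overline{L},B}
$$
equals $\tfrac{|\det R|}{\#B}\,I$, which is equivalent to the tight-frame claim, since
$$
\sum_{\lambda\in\overline{L}} |\langle \mathbf{x},\mathbf{e}_{R,\lambda}\rangle|^2
= \|\mathcal{F}_{\overline{L},B}\mathbf{x}\|^2
= \mathbf{x}^* G\,\mathbf{x}.
$$
A direct computation gives that the $(b,b')$ entry of $G$ is
$$
G_{b,b'} = \frac{1}{\#B}\sum_{\lambda\in\overline{L}} e^{2\pi i\langle R^{-1}(b'-b),\lambda\rangle}.
$$
So everything reduces to establishing the character sum identity
$$
S(m) := \sum_{\lambda\in\overline{L}} e^{2\pi i\langle R^{-1}m,\lambda\rangle}
= \begin{cases} |\det R| & \text{if } m\in R(\mathbb{Z}^d),\\ 0 & \text{otherwise,} \end{cases}
\qquad m\in\mathbb{Z}^d.
$$

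The key observation is that the function $\lambda\mapsto e^{2\pi i\langle R^{-1}m,\lambda\rangle}$ on $\mathbb{Z}^d$ is well-defined on the quotient group $\mathbb{Z}^d/R^{\mathtt T}(\mathbb{Z}^d)$: for any $k\in\mathbb{Z}^d$,
$$
\langle R^{-1}m,\,R^{\mathtt T}k\rangle = m^{\mathtt T}(R^{-1})^{\mathtt T}R^{\mathtt T}k = \langle m,k\rangle\in\mathbb{Z}.
$$
So it defines a character $\chi_m$ on the finite abelian group $\mathbb{Z}^d/R^{\mathtt T}(\mathbb{Z}^d)$, which has order $|\det R^{\mathtt T}|=|\det R|$. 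Moreover, $\chi_m$ is the trivial character precisely when $\langle R^{-1}m,\lambda\rangle\in\mathbb{Z}$ for every $\lambda\in\mathbb{Z}^d$, that is, when $R^{-1}m\in\mathbb{Z}^d$, i.e., $m\in R(\mathbb{Z}^d)$. By the standard orthogonality relation for characters of a finite abelian group, $S(m)$ equals $|\det R|$ when $\chi_m$ is trivial and $0$ otherwise, yielding the displayed identity.

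To finish, I apply this with $m = b'-b$. When $b=b'$, $S(0)=|\det R|$, so $G_{b,b}=\tfrac{|\det R|}{\#B}$. When $b\neq b'$, the hypothesis that $B$ consists of distinct representatives of $\mathbb{Z}^d/R(\mathbb{Z}^d)$ forces $b'-b\notin R(\mathbb{Z}^d)$, so $S(b'-b)=0$ and $G_{b,b'}=0$. Hence $G=\tfrac{|\det R|}{\#B} I$, which gives
$$
\sum_{\lambda\in\overline{L}} |\langle \mathbf{x},\mathbf{e}_{R,\lambda}\rangle|^2
= \frac{|\det R|}{\#B}\,\|\mathbf{x}\|^2
\quad\text{for all } \mathbf{x}\in\mathbb{C}^{\#B},
$$
completing the proof. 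The only conceptual step is the character orthogonality argument on the quotient group $\mathbb{Z}^d/R^{\mathtt T}(\mathbb{Z}^d)$; everything else is bookkeeping, and no serious obstacle arises.
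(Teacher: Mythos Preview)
Your proof is correct and rests on the same underlying fact as the paper's: the orthogonality of characters of $\mathbb{Z}^d/R^{\mathtt T}(\mathbb{Z}^d)$, equivalently the unitarity of the associated DFT matrix. The only cosmetic difference is that the paper first extends $B$ to a full set $\overline{B}$ of coset representatives, invokes the unitarity of the square matrix $\frac{1}{\sqrt{|\det R|}}\bigl(e^{2\pi i\langle R^{-1}b,\lambda\rangle}\bigr)_{\lambda\in\overline{L},\,b\in\overline{B}}$, and then restricts to vectors supported on $B$, whereas you compute the Gram matrix $\mathcal{F}_{\overline{L},B}^*\mathcal{F}_{\overline{L},B}$ directly via the character sum; these are two packagings of the same computation.
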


\begin{proof}

Given an integer expanding matrix $R$, let $\overline{B}$ be a complete set of distinct representative of the group ${\mathbb Z}^d/R({\mathbb Z}^d)$. Then it is well-known that the matrix 
$$
\frac{1}{\sqrt{|\det (R)|}} \left( e^{2\pi i \langle R^{-1}b,\lambda\rangle}\right)_{\lambda\in\overline{L},b\in\overline{B}}
$$ 
is a unitary matrix. Hence, the rows form an orthonormal basis for ${\mathbb C}^{\#\overline{B}}$. Let 
$$
\tilde{\bf e}_{R,\lambda} =  \frac{1}{\sqrt{\#\overline{ B}}}\left( e^{2\pi i \langle R^{-1}b,\lambda\rangle}\right)_{b\in \cj B}^{\mathtt T} \in {\mathbb C}^{\#\cj B}
$$
in $\bc^{\#\overline{ B}}$. Then we have 
$$
\sum_{\lambda\in\overline{L}} \left|\langle{\bf x},\tilde{\bf e}_{R,\lambda}  \rangle \right|^2 =  \frac{|\det (R)|}{\#B} \|{\bf x}\|^2.
$$
Taking ${\bf x}$ to be a vector ${\bf w}\in{\mathbb C}^{\#B}$ and zero  on entries located at  $\overline{B}\setminus B$, we have that 
$$
\sum_{\lambda\in\overline{L}} \left|\langle{\bf w},{\bf e}_{R,\lambda}  \rangle \right|^2 =  \frac{|\det (R)|}{\#B} \|{\bf w}\|^2
$$
for all ${\bf w}\in{\mathbb C}^{\#B}$. Hence, $\{{\bf e}_{R,\lambda}:\lambda\in \overline{L}\}$ forms a unit norm tight frame on ${\mathbb C}^{\#B}$ with its tight frame constant $D = \frac{|\det (R)|}{\#B}$.
\end{proof}

\subsection{Concatenation of frame/Riesz sequence triples.} Given frame/Riesz sequence towers, we can concatenate finitely many factors to form a larger frame/Riesz sequence triple. Define 
$$
{\bf R}_n = R_n...R_1
$$ and let
\begin{equation}\label{eqB_n}
 {\bf B}_n ={\bf R}_{n} \cdot \left\{\sum_{k=1}^{n}{\bf R}_k^{-1}b_k: b_k\in B_k \right\} = R_n...R_2(B_1)+R_n...R_3(B_2)+...+B_n.
\end{equation}
\begin{equation}\label{Lambda_n}
\Lambda_n = L_1+R_1^TL_2+...+(R_1^TR_2^T...R_{n-1}^T)L_n
\end{equation}
\begin{proposition}\label{proposition1.3}
With  the notations above, the following statements hold:
\begin{enumerate}
\item Suppose that $\{(R_j,B_j,L_j): j=1,2,...\}$ forms a frame tower. Then $({\bf R}_n, {\bf B}_n,\Lambda_n)$ forms a frame triple with bounds $\prod_{j=1}^nC_j$, $\prod_{j=1}^n D_j$.
\item Suppose that $(R_j,B_j,L_j)$ forms a Riesz sequence tower. Then $({\bf R}_n, {\bf B}_n,\Lambda_n)$ forms a Riesz sequence triple with bounds $\prod_{j=1}^nC_j$, $\prod_{j=1}^n D_j$. 
\end{enumerate}
\end{proposition}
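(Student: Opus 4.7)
The plan is to argue both parts by induction on $n$, the base case $n=1$ being nothing but the definition of the respective triple. The engine driving the inductive step is a multiplicative factorization of the entries of $\mathcal{F}_{\Lambda_n,\mathbf{B}_n}$ in terms of the entries of $\mathcal{F}_{\Lambda_{n-1},\mathbf{B}_{n-1}}$ and $\mathcal{F}_{L_n,B_n}$. Using the parameterizations $b = R_n b' + b_n \in \mathbf{B}_n$ (with $b' \in \mathbf{B}_{n-1}$, $b_n \in B_n$) and $\lambda = \mu + \mathbf{R}_{n-1}^{\mathtt T} l_n \in \Lambda_n$ (with $\mu \in \Lambda_{n-1}$, $l_n \in L_n$), the identity $\mathbf{R}_n^{-1} = \mathbf{R}_{n-1}^{-1} R_n^{-1}$ yields $\mathbf{R}_n^{-1} b = \mathbf{R}_{n-1}^{-1} b' + \mathbf{R}_n^{-1} b_n$. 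Expanding $\langle \mathbf{R}_n^{-1} b,\lambda\rangle$ produces four inner products, one of which, $\langle \mathbf{R}_{n-1}^{-1} b',\mathbf{R}_{n-1}^{\mathtt T} l_n\rangle = \langle b',l_n\rangle$, lies in $\mathbb{Z}$ because $b',l_n \in \mathbb{Z}^d$. After this integer drops out under $e^{2\pi i\cdot}$ one is left with
\begin{equation*}
e^{2\pi i \langle \mathbf{R}_n^{-1} b,\lambda\rangle} \;=\; e^{2\pi i \langle \mathbf{R}_{n-1}^{-1} b',\mu\rangle}\cdot e^{2\pi i \langle R_n^{-1} b_n,l_n\rangle}\cdot e^{2\pi i \langle \mathbf{R}_n^{-1} b_n,\mu\rangle}.
\end{equation*}

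For (i), given $x \in \mathbb{C}^{\#\mathbf{B}_n}$ with slices $x^{(b_n)}_{b'} := x_{R_n b' + b_n}$, I absorb the mixed third factor into the auxiliary vector
\begin{equation*}
y^{(\mu)}_{b_n} \;:=\; e^{2\pi i \langle \mathbf{R}_n^{-1} b_n,\mu\rangle}\,\bigl(\mathcal{F}_{\Lambda_{n-1},\mathbf{B}_{n-1}} x^{(b_n)}\bigr)_\mu, \qquad \mu \in \Lambda_{n-1},\; b_n \in B_n.
\end{equation*}
Inserting the factorization into $(\mathcal{F}_{\Lambda_n,\mathbf{B}_n} x)_{(\mu,l_n)}$ and separating the $b'$-sum from the $b_n$-sum gives $(\mathcal{F}_{\Lambda_n,\mathbf{B}_n} x)_{(\mu,l_n)} = (\mathcal{F}_{L_n,B_n} y^{(\mu)})_{l_n}$, so $\|\mathcal{F}_{\Lambda_n,\mathbf{B}_n} x\|^2 = \sum_{\mu \in \Lambda_{n-1}} \|\mathcal{F}_{L_n,B_n} y^{(\mu)}\|^2$. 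The level-$n$ frame bounds sandwich each summand between $C_n\|y^{(\mu)}\|^2$ and $D_n\|y^{(\mu)}\|^2$; because the absorbed phase has unit modulus, $\sum_\mu \|y^{(\mu)}\|^2 = \sum_{b_n}\|\mathcal{F}_{\Lambda_{n-1},\mathbf{B}_{n-1}} x^{(b_n)}\|^2$, which the inductive hypothesis then sandwiches between $\prod_{j<n} C_j \|x\|^2$ and $\prod_{j<n} D_j \|x\|^2$ (using $\sum_{b_n}\|x^{(b_n)}\|^2 = \|x\|^2$). Multiplying the two layers of bounds closes the induction.

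Part (ii) admits the same treatment, applied to the transpose: with $a^{(\mu)}_{l_n} := a_{(\mu,l_n)}$ and $z^{(b_n)}_\mu := e^{2\pi i \langle \mathbf{R}_n^{-1} b_n,\mu\rangle}(\mathcal{F}_{L_n,B_n}^{\mathtt T} a^{(\mu)})_{b_n}$, one checks $(\mathcal{F}_{\Lambda_n,\mathbf{B}_n}^{\mathtt T} a)_{(b',b_n)} = (\mathcal{F}_{\Lambda_{n-1},\mathbf{B}_{n-1}}^{\mathtt T} z^{(b_n)})_{b'}$ and runs the same telescoping with the Riesz-sequence bounds at each level. Alternatively, Lemma~\ref{lemma1} applied level-by-level converts the Riesz tower into the frame tower $\{(R_{n+1-j}^{\mathtt T}, L_{n+1-j}, B_{n+1-j})\}_{j=1}^n$, whose concatenation, after reindexing, is exactly $(\mathbf{R}_n^{\mathtt T},\Lambda_n,\mathbf{B}_n)$; part (i) then supplies the required bounds on the transposed triple, and a second invocation of Lemma~\ref{lemma1} returns us to $(\mathbf{R}_n,\mathbf{B}_n,\Lambda_n)$.

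The only real obstacle is bookkeeping: isolating the mixed phase $e^{2\pi i \langle \mathbf{R}_n^{-1} b_n,\mu\rangle}$, which couples the ``old'' row index $\mu$ to the ``new'' column index $b_n$, and absorbing it into the auxiliary vector $y^{(\mu)}$ (frame case) or $z^{(b_n)}$ (Riesz case) at the one slot where its unit modulus renders it invisible to the relevant norm computation. With that identification in hand, the two layers of the induction decouple cleanly and the bounds simply multiply.
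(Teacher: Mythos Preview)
Your proof is correct and follows essentially the same approach as the paper: the same inductive splitting $\mathbf b=R_n b'+b_n$, $\lambda=\mu+\mathbf R_{n-1}^{\mathtt T}l_n$, the same observation that $\langle b',l_n\rangle\in\mathbb Z$ kills one of the four cross terms, and the same absorption of the unimodular mixed phase when passing from the level-$n$ frame bound to the inductive hypothesis---your $y^{(\mu)}$ is precisely the parenthesized inner sum in the paper's display. For (ii) the paper uses exactly your second (Lemma~\ref{lemma1}/duality) route, reversing the order of the tower and reading off $\mathbf L_n=\Lambda_n$, $\widetilde{\mathbf B}_n=\mathbf B_n$; your additional direct transpose argument is a fine alternative.
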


\begin{proof}
(i). We prove it by mathematical induction. When $n=1$, it is the frame triple for $(R_1,B_1,L_1)$, so the statement is true trivially. Assume now the inequality is true for $n-1$. Then we decompose ${\bf b}\in {\bf B}_n$ and $\lambda\in {\Lambda}_n$ by
$$
{\bf b} = b_n+ R_n {\bf b}_{n-1}, \ \lambda = \lambda_{n-1}+ {\bf R}_{n-1}^{\mathtt T} l_n,
$$
where $b_n\in B_{n}$, ${\bf b}_{n-1}\in{\bf B}_{n-1}$, $\lambda_{n-1}\in {\Lambda}_{n-1}$ and $l_n\in L_n$. Let also ${\bf M}_n = \prod_{j=1}^{n}(\#B_j)$ we have
$$
\begin{aligned}
&\sum_{\lambda\in{\Lambda}_n}\left|\sum_{{\bf b}\in{\bf B}_{n}}w_{\bf b}\frac{1}{\sqrt{{\bf M}_n}}e^{-2\pi i \langle{\bf R}_n^{-1}{\bf b},\lambda\rangle}\right|^2\\
=&\sum_{\lambda_{n-1}\in{\Lambda}_{n-1}}\sum_{l_n\in L_n}\left|\sum_{{\bf b}_{n-1}\in{\bf B}_{n-1}}\sum_{b_n\in B_n}\frac{1}{\sqrt{{\bf M}_{n}}}w_{R_n{\bf b}_{n-1}+b_n}e^{-2\pi i \left\langle {\bf R}_{n}^{-1}(b_n+ R_n {\bf b}_{n-1}),\lambda_{n-1}+ {\bf R}_{n-1}^{\mathtt T}l_n\right\rangle}\right|^2.\\
\end{aligned}
$$
Note that $\langle{\bf R}_{n}^{-1} (R_n{\bf b}_{n-1}), {\bf R}_{n-1}^{\mathtt T}l_n\rangle = \langle{\bf b}_{n-1},l_n\rangle$ is always an integer, so the term above can be written as
$$
\sum_{\lambda_{n-1}\in{\Lambda}_{n-1}}\sum_{l_n\in L_n}\left|\sum_{b_n\in B_n}\frac{1}{\sqrt{\#B_n}}\left(\sum_{{\bf b}_{n-1}\in{\bf B}_{n-1}}\frac{1}{\sqrt{{\bf M}_{n-1}}}w_{R_n{\bf b}_{n-1}+b_n}e^{-2\pi i \langle {\bf R}_{n}^{-1}(b_n+ R_n {\bf b}_{n-1}),\lambda_{n-1}\rangle}\right)e^{-2\pi i  \langle R_n^{-1}b_n,l_n\rangle}\right|^2
$$
Using the frame triple assumption for  $(R_n,B_n, L_n)$ and also the induction hypothesis, we further get
$$
\begin{aligned}
\leq&D_n\cdot\sum_{\lambda_{n-1}\in{\Lambda}_{n-1}}\sum_{b_n\in B_n}\left|\sum_{{\bf b}_{n-1}\in{\bf B}_{n-1}}\frac{1}{\sqrt{{\bf M}_{n-1}}}w_{R_n{\bf b}_{n-1}+b_n}e^{-2\pi i \langle {\bf R}_{n}^{-1}(b_n+ R_n {\bf b}_{n-1}),\lambda_{n-1}\rangle}\right|^2\\
=&D_n \cdot \sum_{b_n\in B_{n}}\sum_{\lambda_{n-1}\in{\Lambda}_{n-1}}\left|\sum_{{\bf b}_{n-1}\in{\bf B}_{n-1}}\frac{1}{\sqrt{{\bf M}_{n-1}}}w_{R_n{\bf b}_{n-1}+b_n}e^{-2\pi i \langle {\bf R}_{n-1}^{-1}  {\bf b}_{n-1},\lambda_{n-1}\rangle}\right|^2\\
\leq &\prod_{j=1}^{n}D_j \cdot\sum_{b_n\in B_{n}}\sum_{{\bf b}_{n-1}\in{\bf B}_{n-1}}|w_{R_n{\bf b}_{n-1}+b_n}|^2\\
=&\prod_{j=1}^{n}D_j \cdot\|{\bf w}\|^2.\\
\end{aligned}
$$
This completes the proof of the upper bound and the proof of the lower bound is analogous.

\medskip

For (ii), by Lemma \ref{lemma1}, we note that $(R_j^{\mathtt T}, L_j,B_j)$ with $j=n,n-1,...,1$ (in reverse order) now forms a frame tower with frame bound $\left(\frac{\#B_j}{\#L_j}C,\frac{\#B_j}{\#L_j}D_j\right)$. Then we know that 
$(R_1^{\mathtt T}...R_n^{\mathtt T}, {\bf L}_n, \widetilde{\bf B}_n)$ forms a frame triple with bounds $\left(\prod_{j=1}^n\frac{\#B_j}{\#L_j}C_j,\prod_{j=1}^n\frac{\#B_j}{\#L_j}D_j\right)$, where 
$$
{\bf L_n} =  R^{\mathtt T}_1...R^{\mathtt T}_{n-1}(L_n)+R^{\mathtt T}_1...R^{\mathtt T}_{n-2}(L_{n-1})+...+L_1 = \Lambda_n,
$$
by replacing $R_k$ with $R_{n-k+1}^{\mathtt T}$ and $B_k$ with $L_{n-k+1}$ in (\ref{eqB_n}), and similarly
$$
\widetilde{{\bf B}}_n = B_n+...+R_n...R_2(B_1) = {\bf B}_n.
$$

\medskip

Hence, $(R_1^{\mathtt T}...R_n^{\mathtt T}, \Lambda_n, {\bf B}_n)$ forms a frame triple with bounds $\left(\prod_{j=1}^n\frac{\#B_j}{\#L_j}C_j,\prod_{j=1}^n\frac{\#B_j}{\#L_j}D_j\right)$. Using Lemma \ref{lemma1} again, $({\bf R}_n, {\bf B}_n, \Lambda_n)$ forms a Riesz sequence triple.
\end{proof}

\section{Frame-spectral/RS-spectral Cantor measures }

In this section, we will use the frame/Riesz-sequence towers to generate Cantor measures with Fourier frames and Riesz sequences. Given a sequence of expanding matrices $R_n$ with integer entries  and a finite collection of integer digit sets  $B_j$, a natural probability measure is induced 
\begin{equation}\label{mu}
\mu = \mu(\{R_j,B_j\}) = \delta_{{\bf R}_1^{-1}B_1}\ast\delta_{{\bf R}_2^{-1}B_2}\ast...\ast\delta_{{\bf R}_n^{-1}B_n}\ast....,
\end{equation}
and we assume that the infinite convolution product is weakly convergent to a Borel probability measure.  We let
  $$
  \mu_n = \delta_{{\bf R}_1^{-1}B_1}\ast\delta_{{\bf R}_2^{-1}B_2}\ast...\ast\delta_{{\bf R}_n^{-1}B_n}, \ \mu_{>n} = \delta_{{\bf R}_{n+1}^{-1}B_{n+1}}\ast\delta_{{\bf R}_{n+2}^{-1}B_{n+2}}\ast...
  $$
so that $\mu = \mu_n\ast\mu_{>n}$.  Let also
  $$
 K_{n} = \left\{\sum_{k=n+1}^{\infty}{\bf R}_k^{-1}b_k: b_k\in B_k \right\},  {\bf B}_n = {\bf R}_n\left\{\sum_{k=1}^{n}{\bf R}_k^{-1}b_k: b_k\in B_k \right\}.
  $$
 Hence,  $K_0 = \bigcup_{{\bf b}\in {\bf B}_n}({\bf R}_n^{-1}{\bf b}_n+K_n)$ and $K_0, {\bf R}_n^{-1}{\bf B}_n, K_n$ are respectively the support of $\mu,\mu_n$ and $\mu_{>n}.$ We say that $\mu$ satisfies the {\it no overlap condition} if
 $$
 \mu (({\bf R}_n^{-1}{\bf b}_n+ K_n)\cap({\bf R}_n^{-1}{\bf b}_n'+K_n))=0, \ \mbox{for all} \ {\bf b}_n\neq{\bf b}_n'\in{\bf B}_n, \ \mbox{for all}  \ n\in{\mathbb N}.
 $$
Let also $K_{{\bf b},n} = {\bf R}_n^{-1}{\bf b}+ K_n$ if ${\bf b}\in {\bf B}_n$.

Recall that the Fourier transform of a finite Borel measure $\mu$ on $\br^d$ is defined as 
$$\widehat\mu(y)=\int e^{-2\pi i\ip{y}{x}}\,d\mu(x),\quad(x\in\br^d).$$
\begin{remark}
{\it We will assume throughout the paper that the no-overlap condition holds}. If ${\bf R}_j = N_j$ are integers in dimension one and $B_j$ are chosen from $\{0,1,...,N_j-1\}$, then $\mu$ are the Moran-type measure studied in \cite{LW17}. On the other hand, if all $R_j $ are the same matrix and $B_j$ is a subset of distinct representatives in the group ${\mathbb Z}^d/R({\mathbb Z}^d)$, then the resulting measure is a self-affine measure. They all satisfy the no-overlap condition. For the latter case, the no-overlap condition was proved in \cite[Section 2]{DHL18}.
\end{remark}

\begin{lemma}\label{lem3.2}
Under the no-overlap condition for the measure $\mu$, suppose that $f=\sum_{{\bf b}\in{\bf B}_n}w_{\bf b}{\bf 1}_{K_{{\bf b},n}}$. Then, 
\begin{equation}\label{eq_FT_mu}
\int f(x)e^{-2\pi i\ip{\lambda}{x}}\,d\mu(x)=\frac{1}{\#{\bf B}_n}\widehat{\mu_{>n}}(\lambda)\sum_{b\in{\bf B}_n}w_b e^{-2\pi i\ip{{\bf R}_n^{-1}b}{\lambda}}.
\end{equation}
\begin{equation}\label{eq_norm}
\int|f|^2d\mu = \frac{1}{\#{\bf B}_n} \sum_{{\bf b}\in{\bf B}_n} |w_{\bf b}|^2.
\end{equation}
\end{lemma}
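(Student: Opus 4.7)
The plan is to decompose $\mu$ via the convolution identity $\mu=\mu_n\ast\mu_{>n}$, where $\mu_n=\frac{1}{\#{\bf B}_n}\sum_{{\bf b}\in{\bf B}_n}\delta_{{\bf R}_n^{-1}{\bf b}}$ and $\mu_{>n}$ is supported on $K_n$, and then exploit the no-overlap condition to show that the indicator functions $\mathbf{1}_{K_{{\bf b},n}}$ evaluated on the translated support decouple cleanly. Concretely, for any bounded Borel function $g$,
\begin{equation*}
\int g\,d\mu=\frac{1}{\#{\bf B}_n}\sum_{{\bf b}\in{\bf B}_n}\int g\bigl({\bf R}_n^{-1}{\bf b}+y\bigr)\,d\mu_{>n}(y).
\end{equation*}

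The main technical step is the following claim: for each fixed ${\bf b}\in{\bf B}_n$ and each ${\bf b}'\ne{\bf b}$, one has $\mathbf{1}_{K_{{\bf b}',n}}({\bf R}_n^{-1}{\bf b}+y)=0$ for $\mu_{>n}$-a.e.\ $y\in K_n$, while $\mathbf{1}_{K_{{\bf b},n}}({\bf R}_n^{-1}{\bf b}+y)=1$ for $\mu_{>n}$-a.e.\ $y\in K_n$. To prove this, apply the convolution decomposition to the nonnegative Borel set $K_{{\bf b},n}\cap K_{{\bf b}',n}$:
\begin{equation*}
0=\mu\bigl(K_{{\bf b},n}\cap K_{{\bf b}',n}\bigr)=\frac{1}{\#{\bf B}_n}\sum_{{\bf b}''\in{\bf B}_n}\mu_{>n}\bigl((K_{{\bf b},n}\cap K_{{\bf b}',n})-{\bf R}_n^{-1}{\bf b}''\bigr).
\end{equation*}
Taking the ${\bf b}''={\bf b}$ term and using that $\mu_{>n}$ is supported in $K_n$ yields $\mu_{>n}\bigl(K_n\cap({\bf R}_n^{-1}({\bf b}'-{\bf b})+K_n)\bigr)=0$, which translates into the desired a.e.\ vanishing. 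The complementary statement $\mathbf{1}_{K_{{\bf b},n}}({\bf R}_n^{-1}{\bf b}+y)=1$ for $\mu_{>n}$-a.e.\ $y\in K_n$ is automatic since $K_n\subset K_n$. In particular, this gives $\mu(K_{{\bf b},n})=\frac{1}{\#{\bf B}_n}$.

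Given this, the first identity follows immediately: writing $g(x)=f(x)e^{-2\pi i\langle\lambda,x\rangle}$, for each ${\bf b}$ the function $f({\bf R}_n^{-1}{\bf b}+y)$ equals $w_{\bf b}$ for $\mu_{>n}$-a.e.\ $y$, hence
\begin{equation*}
\int f(x)e^{-2\pi i\langle\lambda,x\rangle}d\mu(x)=\frac{1}{\#{\bf B}_n}\sum_{{\bf b}}w_{\bf b}\,e^{-2\pi i\langle{\bf R}_n^{-1}{\bf b},\lambda\rangle}\int e^{-2\pi i\langle\lambda,y\rangle}d\mu_{>n}(y),
\end{equation*}
which is exactly \eqref{eq_FT_mu}. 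The second identity \eqref{eq_norm} follows in the same way by observing that $|f|^2=\sum_{\bf b}|w_{\bf b}|^2\mathbf{1}_{K_{{\bf b},n}}$ $\mu$-almost everywhere (the cross terms vanish by the no-overlap condition) and then integrating using $\mu(K_{{\bf b},n})=1/\#{\bf B}_n$.

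The only delicate point, and hence the main obstacle, is establishing that the no-overlap hypothesis as stated at the level of $\mu$ transfers to a statement about $\mu_{>n}$-null sets of translates inside $K_n$; this is handled by the decomposition argument above, where the crucial observation is that a single nonnegative summand must vanish when the whole sum is zero.
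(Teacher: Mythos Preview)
Your proof is correct and follows essentially the same route as the paper: decompose $\mu=\mu_n\ast\mu_{>n}$, expand the integral, and use the no-overlap condition to kill the off-diagonal contributions ${\bf b}'\ne{\bf b}$. In fact your write-up is slightly more careful than the paper's at the key step: the paper simply asserts that ``due to the non-overlap condition, $x$ has to be equal to ${\bf R}_n^{-1}{\bf b}$ to get non-zero contribution,'' whereas you explicitly show how the $\mu$-null-set hypothesis transfers to the $\mu_{>n}$-null-set statement needed.
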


\begin{proof}
We have 
$$\int f(x)e^{-2\pi i\ip{\lambda}{x}}\,d\mu(x)=\sum_{{\bf b}\in{\bf B}_n}w_{\bf b}\int {\bf 1}_{K_{{\bf b},n}}(x)e^{-2\pi i \ip{\lambda}{x}}\,d(\mu_n*\mu_{>n})(x)
$$$$=\sum_{{\bf b}\in{\bf B}_n}w_b\int {\bf 1}_{{\bf R}_n^{-1}{\bf b}+K_{0,n}}(x+y)e^{-2\pi i \ip{\lambda}{x+y}}\,d\mu_n(x)\,d\mu_{>n}(y).$$
Note that $\mu_{>n}$ is supported on $K_{0,n}$ and $\mu_n$ is supported on ${\bf R}_n^{-1}{\bf B}_n$, and, due to the non-overlap condition, $x$ has to be equal to ${\bf R}_n^{-1}b$ to get non-zero contribution. Thus, the quantity above is equal to 
$$=\sum_{{\bf b}\in{\bf B}_n}\frac{1}{\#{\bf B}_n}\int {\bf 1}_{{\bf R}_n^{-1}{\bf b}+K_{0,n}}({\bf R}_n^{-1}{\bf b}+y)e^{-2\pi i \ip{\lambda}{{\bf R}_n^{-1}{\bf b}+y}}\,d\mu_{>n}(y)$$
$$=\sum_{{\bf b}\in{\bf B}_n}\frac{1}{\#{\bf B}_n}e^{-2\pi i \ip{\lambda}{{\bf R}_n^{-1}{\bf b}}}\int e^{-2\pi i\ip{\lambda}{y}}\,d\mu_{>n}(y)$$
$$=\frac{1}{\#{\bf B}_n}\widehat{\mu_{>n}}(\lambda)\sum_{{\bf b}\in{\bf B}_n}w_{\bf b} e^{-2\pi i\ip{{\bf R}_n^{-1}{\bf b}}{\lambda}}.$$
(\ref{eq_norm}) follows from a standard computation.
\end{proof}


%
%
%

Define $\Lambda_n$ as in \eqref{Lambda_n} and let 
\begin{equation}
\Lambda=\bigcup_{n=1}^\infty\Lambda_n.
\label{eqLambda}
\end{equation}

\begin{theorem}\label{thm1.3}
\begin{enumerate}
\item Suppose that $(R_j,B_j,L_j)$ forms a frame tower with the associated measure $\mu = \mu(R_j,B_j)$ in (\ref{mu}) and that the no-overlap condition is satisfied. Suppose furthermore that 
\begin{equation}\label{eq1.5}
\delta(\Lambda) = \inf_{n\ge1}\inf_{\lambda\in\Lambda_n} |\widehat{\mu_{>n}}(\lambda)|^2 >0
\end{equation}
 Then $\{e^{2\pi i \langle \lambda,x\rangle}: \lambda\in\Lambda\}$ forms a frame for $L^2(\mu)$ with bounds $ \prod_{j=1}^{\infty}C_j$, $\prod_{j=1}^{\infty} D_j$.

\smallskip

\item Suppose that $(R_j,B_j,L_j)$ forms a Riesz sequence tower and that the associated measure satisfies the no-overlap condition. 
 Then $\{e^{2\pi i \langle \lambda,x\rangle}: \lambda\in\Lambda\}$ forms a Riesz sequence for $L^2(\mu)$ with bounds $\prod_{j=1}^{\infty}C_j$, $\prod_{j=1}^{\infty} D_j$. 
 
\end{enumerate}
\end{theorem}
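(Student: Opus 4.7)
The strategy is to combine the finite-level concatenation from Proposition \ref{proposition1.3} with the step-function Fourier formula of Lemma \ref{lem3.2}, and then let the level $n\to\infty$. Part (i) genuinely uses the tail estimate \eqref{eq1.5} together with a density argument, while part (ii) is purely finitary once one exploits the product structure $\mu=\mu_n\ast\mu_{>n}$.

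For part (i), I would first establish the frame inequality for step functions $f=\sum_{{\bf b}\in{\bf B}_n} w_{\bf b}{\bf 1}_{K_{{\bf b},n}}$. Any such $f$ is also a step function at any level $m\ge n$ by refinement, so Lemma \ref{lem3.2} expresses $\widehat{f\,d\mu}(\lambda)$ as $\widehat{\mu_{>m}}(\lambda)$ times a trigonometric polynomial indexed by ${\bf B}_m$. Combined with the frame triple bound of Proposition \ref{proposition1.3}(i) at level $m$, the trivial estimate $|\widehat{\mu_{>m}}(\lambda)|\le 1$ for the upper bound, and the tail estimate \eqref{eq1.5} restricted to $\Lambda_m$ for the lower bound, this yields
\[
\delta(\Lambda)\prod_{j=1}^{m}C_j\int|f|^2\,d\mu \;\le\; \sum_{\lambda\in\Lambda_m}|\widehat{f\,d\mu}(\lambda)|^2\;\le\;\prod_{j=1}^{m}D_j\int|f|^2\,d\mu.
\]
Because $0\in L_j$ forces $\Lambda_m\uparrow\Lambda$, monotone convergence lets me push $m\to\infty$, and the partial products converge to $\prod_{j=1}^{\infty}C_j,\prod_{j=1}^{\infty}D_j$. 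To pass from step functions to arbitrary $f\in L^2(\mu)$, I would use that the cylinders $K_{{\bf b},n}$ have diameter $O(\|{\bf R}_n^{-1}\|)\to 0$ and partition $\supp(\mu)$ by the no-overlap hypothesis, so step functions at some level are dense in $L^2(\mu)$. The upper bound then extends by Fatou's lemma, and the lower bound by the continuity of the analysis operator already shown to be bounded.

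Part (ii) needs no tail estimate. A finitely supported sequence $(a_\lambda)_{\lambda\in\Lambda}$ lives in some $\Lambda_n$, and using $\mu=\mu_n\ast\mu_{>n}$ I would write
\[
\int\bigg|\sum_{\lambda\in\Lambda_n} a_\lambda e^{2\pi i\langle\lambda,\cdot\rangle}\bigg|^2\,d\mu \;=\; \int\!\!\int\bigg|\sum_\lambda \tilde a_\lambda(y)\,e^{2\pi i\langle\lambda,x\rangle}\bigg|^2\,d\mu_n(x)\,d\mu_{>n}(y),
\]
where $\tilde a_\lambda(y)=a_\lambda e^{2\pi i\langle\lambda,y\rangle}$ satisfies $|\tilde a_\lambda(y)|=|a_\lambda|$. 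The inner integral is exactly $\|\sum_\lambda\tilde a_\lambda(y)\,{\bf e}_{{\bf R}_n,\lambda}\|^2$ in $\mathbb{C}^{\#{\bf B}_n}$, so Proposition \ref{proposition1.3}(ii) sandwiches it between $\prod_{j=1}^{n}C_j\sum|a_\lambda|^2$ and $\prod_{j=1}^{n}D_j\sum|a_\lambda|^2$. Since $\mu_{>n}$ is a probability measure the bounds are preserved on integrating over $y$, and rerunning the argument at any level $m\ge n$ and letting $m\to\infty$ sharpens them to $\prod_{j=1}^{\infty}C_j$ and $\prod_{j=1}^{\infty}D_j$. The absence of any tail condition here is precisely what makes Riesz sequences much more freely available than frames in this fractal setting.

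The main obstacle I anticipate is the bookkeeping in part (i): juggling the double limit $\Lambda_m\uparrow\Lambda$ against $\prod_{j=1}^{m}\to\prod_{j=1}^{\infty}$, ensuring the tail estimate is invoked only on $\Lambda_m$ where it is assumed (not on all of $\Lambda$), and cleanly extending the bounds from step functions to $L^2(\mu)$ without degrading the constants.
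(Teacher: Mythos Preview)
Your outline for part (i) yields the lower bound $\delta(\Lambda)\prod_{j=1}^{\infty}C_j$, not the bound $\prod_{j=1}^{\infty}C_j$ asserted in the theorem. This is not a bookkeeping issue: the factor $\delta(\Lambda)$ enters unavoidably the moment you control $|\widehat{\mu_{>m}}(\lambda)|^2$ from below by $\delta(\Lambda)$, and nothing in the subsequent limit $m\to\infty$ or in the density argument removes it. The paper establishes exactly your inequality as a first step, and then runs an additional bootstrapping argument (borrowed from \cite{MR3055992} and \cite[Appendix]{DHL18}) to eliminate $\delta(\Lambda)$: one writes $Q_m(f)=Q_n(f)+\sum_{\lambda\in\Lambda_m\setminus\Lambda_n}|\widehat{f\,d\mu}(\lambda)|^2$, bounds the second term below using $\delta(\Lambda)$ and the level-$m$ frame triple, passes $m\to\infty$ using weak convergence $\mu_m\to\mu$, and then lets $n\to\infty$. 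The resulting inequality $Q_\infty(f)\ge Q_\infty(f)+\delta(\Lambda)\big({\bf C}_\infty\|f\|^2-Q_\infty(f)\big)$ forces ${\bf C}_\infty\|f\|^2\le Q_\infty(f)$ with no $\delta(\Lambda)$ factor. Your proposal is missing this entire step.

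For part (ii) your argument is correct and in fact takes a slightly different route from the paper. The paper computes $\|\sum a_\lambda e_\lambda\|_{L^2(\mu_n)}^2$ directly, applies Proposition \ref{proposition1.3}(ii), and then passes to $L^2(\mu)$ via weak convergence $\mu_n\to\mu$. You instead use the exact identity $\mu=\mu_n\ast\mu_{>n}$ to write the $L^2(\mu)$ norm as a $\mu_{>n}$-average of $L^2(\mu_n)$ norms with phase-rotated coefficients $\tilde a_\lambda(y)$; since $|\tilde a_\lambda(y)|=|a_\lambda|$ the Riesz bounds survive the averaging. This is a clean alternative that avoids weak convergence altogether, and your final sharpening by rerunning at level $m\ge n$ and letting $m\to\infty$ is legitimate because the middle term $\|\sum a_\lambda e_\lambda\|_{L^2(\mu)}^2$ is independent of $m$.
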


\begin{proof}
Let ${\mathcal S}_n= \{\sum_{{\bf b}\in {\bf B}_n}w_{\bf b} {\bf 1}_{K_{{\bf b},n}}: w_{\bf b}\in {\mathbb C}\}$ and ${\mathcal S} = \bigcup_{n=1}^{\infty} {\mathcal S}_n$. 
To prove (i), we notice that it suffices to check the frame inequality holds for every function $f\in {\mathcal S}$ as they are dense in $L^2(\mu)$. Since $0\in B_n$ for all $n$, the collection ${\mathcal S}_n$ is an increasing union. Given any $f\in {\mathcal S}_{n_0}$ and write it as $f = \sum_{{\bf b}\in {\bf B}_{n_0}}w_{\bf b} {\bf 1}_{K_{{\bf b},{n_0}}}$, for any $n\ge n_0$, using (\ref{eq_FT_mu}) in Lemma \ref{lem3.2} (as the no-overlap condition is satisfied),
$$
\sum_{\lambda\in \Lambda_n} \left|\int f(x)e^{-2\pi i \langle\lambda,x\rangle}d\mu(x)\right|^2 = \frac{1}{\#{\bf B}_n}\sum_{\lambda\in\Lambda_n} |\widehat{\mu_{>n}}(\lambda)|^2\left|\sum_{b\in{\bf B}_n}w_b e^{-2\pi i\ip{{\bf R}_n^{-1}b}{\lambda}} \right|^2.
$$
With the assumption that $\delta(\Lambda)>0$, using that $({\bf R}_n, {\bf B}_n, \Lambda)$ forms a frame triple (Proposition \ref{proposition1.3}) and with (\ref{eq_norm}) in Lemma \ref{lem3.2}, we have 
$$
\delta(\Lambda)\cdot\left(\prod_{j=1}^n C_j \right)\cdot \int|f|^2d\mu \le \sum_{\lambda\in \Lambda_n} \left|\int f(x)e^{-2\pi i \langle\lambda,x\rangle}d\mu(x)\right|^2\le \left(\prod_{j=1}^n D_j \right)\cdot \int|f|^2d\mu
$$
Taking $n\rightarrow \infty$, we show that $\{e^{2\pi i \langle \lambda,x\rangle}: \lambda\in\Lambda\}$ forms a frame for $L^2(\mu)$ with a less sharp frame bound $\delta(\Lambda)\cdot\left(\prod_{j=1}^\infty C_j \right)$.

We now adopt the idea of the proof from  \cite{MR3055992} and \cite[Appendix]{DHL18} to show that $\delta(\Lambda)$ does not appear in the lower frame bound. It suffices to show that the lower bound of  frame inequality holds for a dense set of functions in $L^2(\mu)$. We will check it for step functions in ${\mathcal S}$. Let $f = \sum_{{\bf b}\in {\bf B}_n}w_{\bf b}{\bf 1}_{K_{\bf b},n}\in {\mathcal S}_n$ and note that $f\in{\mathcal S}_m$, for all $m\ge n$; we use this to define the coefficients $w_{\bf b}$, for $b\in {\bf B}_m$, so $f$ can be written also as a function in $\mathcal S_m$ as 
$$
f = \sum_{{\bf b}\in {\bf B}_m}w_{\bf b}{\bf 1}_{K_{\bf b},m}.$$
 Define
$$
Q_{\infty} (f) = \sum_{\lambda\in\Lambda}\left|\int f(x)e^{-2\pi i \lambda x}d\mu(x)\right|^2  =\lim_{n\rightarrow\infty}Q_n(f),
$$
where, with Lemma \ref{lem3.2},
$$
Q_n(f): = \sum_{\lambda\in \Lambda_n}\left|\int f(x)e^{-2\pi i \lambda x}d\mu(x)\right|^2 = \frac{1}{{\bf M}_n}\sum_{\lambda\in\Lambda_n}|\widehat{\mu_{>n}}(\lambda)|^2\left|\sum_{{\bf b}\in B_n}w_{\bf b} \frac{1}{\sqrt{{\bf M}_n}}e^{-2\pi i  \langle {\bf R}_n^{-1}{\bf b}, \lambda\rangle}\right|^2.
$$
 Let ${\mathbf C}_n = \prod_{j=1}^{n}C_j$, ${\mathbf D}_n = \prod_{j=1}^{n} D_j$ for $n=1,2...$ and $n=\infty$ and ${\mathbf M}_n = \prod_{j=1}^n (\#B_j)$. 
We are going to establish the lower bound. Note that 
$$
\begin{aligned}
Q_m(f) =& Q_n(f)+ \sum_{\lambda\in\Lambda_m\setminus \Lambda_{n}}\left|\int f(x)e^{-2\pi i \langle\lambda, x\rangle}d\mu(x)\right|^2\\
=&Q_n(f)+\sum_{\lambda\in\Lambda_m\setminus \Lambda_{n}}\frac{1}{{\mathbf M}_m}|\widehat{\mu_{>m}}(\lambda)|^2\left|\sum_{{\bf b}\in {\bf B}_m}w_{\bf b} \frac{1}{\sqrt{\bf M}_m}e^{-2\pi i  \langle {\bf R}_m^{-1}{\bf b}, \lambda\rangle}\right|^2.\\
\ge& Q_n(f)+\delta(\Lambda)\cdot\sum_{\lambda\in\Lambda_m\setminus \Lambda_{n}}\frac{1}{{\mathbf M}_m}\left|\sum_{{\bf b}\in {\bf B}_m}w_{\bf b} \frac{1}{\sqrt{\bf M}_m}e^{-2\pi i  \langle {\bf R}_m^{-1}{\bf b}, \lambda\rangle}\right|^2.\\
\end{aligned}
$$
Note that, by Proposition \ref{proposition1.3}(i) and Lemma \ref{lem3.2}
$$
\sum_{\lambda\in \Lambda_m}\frac{1}{{\bf M}_m}\left|\sum_{{\bf b}\in B_m}w_{\bf b} \frac{1}{\sqrt{{\bf M}_m}}e^{-2\pi i  \langle {\bf R}_m^{-1}{\bf b}, \lambda\rangle}\right|^2\ge {\bf C}_m\cdot\frac{1}{{\bf M}_m} \sum_{{\bf b}\in{\bf B}_m} |w_{\bf b}|^2  ={\bf C}_m\int|f|^2d\mu.
$$
We further have
$$
\begin{aligned}
Q_m(f) \ge& Q_n(f)+\delta(\Lambda)\cdot \left({\bf C}_m\int|f|^2d\mu-\sum_{\lambda\in \Lambda_n}\frac{1}{{\bf M}_m}\left|\sum_{{\bf b}\in B_m}w_{\bf b} \frac{1}{\sqrt{{\bf M}_m}}e^{-2\pi i  \langle {\bf R}_m^{-1}{\bf b}, \lambda\rangle}\right|^2\right)\\
= &Q_n(f)+\delta(\Lambda)\cdot \left({\bf C}_m\int|f|^2d\mu-\sum_{\lambda\in \Lambda_n}\left|\int f(x)e^{-2\pi i \lambda x}d\mu_m(x)\right|^2\right)
\end{aligned}
$$
For a fixed $n$, we let $m$ go to infinity. By the fact that $Q_{m}(f)$ converges to $Q_{\infty}(f)$ and $\mu_m$ converges weakly to $\mu$, we have
$$
Q_{\infty}(f)\ge Q_n(f) +\delta(\Lambda)\cdot \left({\bf C}_{\infty}\int|f|^2d\mu- \sum_{\lambda\in \Lambda_n}\left|\int f(x)e^{-2 \pi i \lambda x}d\mu(x)\right|^2\right).
$$
We then let $n$ go to infinity and obtain
$$
Q_{\infty}(f)\ge Q_{\infty}(f) +\delta(\Lambda)\cdot \left({\bf C}_{\infty}\int|f|^2d\mu-\sum_{\lambda\in \Lambda}\left|\int f(x)e^{-2\pi i \lambda x}d\mu(x)\right|^2\right).
$$
and thus
$$
\delta(\Lambda)\cdot \left({\bf C}_{\infty}\int|f|^2d\mu-\sum_{\lambda\in \Lambda}\left|\int f(x)e^{-2\pi i \lambda x}d\mu(x)\right|^2\right)\leq 0.
$$
However, $\delta(\Lambda)>0$ and we have
$$
{\bf C}_{\infty}\int|f|^2d\mu\le \sum_{\lambda\in \Lambda}\left|\int f(x)e^{-2\pi i \lambda x}d\mu(x)\right|^2
$$
This establishes the lower bound.

\bigskip

We now prove (ii). Take any finite subset $\Lambda_0$ of ${\Lambda}$. As $\Lambda_n$ is an increasing union, we have that $\Lambda_0\subset\Lambda_n$ for $n$ large. Note that 
$$\left\| \sum_{\lambda\in\Lambda_0}a_{\lambda}e^{2\pi i \langle\lambda,x\rangle}\right\|_{L^2(\mu_n)}^2= \frac{1}{\#{\bf B}_n}\sum_{{\bf b}\in {\bf B}_n} \left| \sum_{\lambda\in \Lambda_0} a_{\lambda} e^{2\pi i \langle {\bf R}_n^{-1}b,\lambda\rangle} \right|^2 =\left\| \sum_{\lambda\in \Lambda_0} a_{\lambda} {\bf e}_{R,\lambda}\right\|^2.
$$
 Hence, by Proposition \ref{proposition1.3},
\begin{equation}\label{eq1.2}
\left(\prod_{j=1}^nC_j\right) \cdot \sum_{\lambda\in\Lambda_0}|a_{\lambda}|^2 \le \left\| \sum_{\lambda\in\Lambda_0}a_{\lambda}e^{2\pi i \langle\lambda,x\rangle}\right\|_{L^2(\mu_n)}^2\le \left(\prod_{j=1}^nD_j\right) \cdot \sum_{\lambda\in\Lambda_0}|a_{\lambda}|^2
\end{equation}
As $\mu_n$ converges weakly to $\mu$ and $\sum_{\lambda\in\Lambda_0}a_{\lambda}e^{2\pi i \langle\lambda,x\rangle}$ is a continuous function, 
$$
\lim_{n\rightarrow\infty}\left\| \sum_{\lambda\in\Lambda_0}a_{\lambda}e^{2\pi i \langle\lambda,x\rangle}\right\|_{L^2(\mu_n)}^2 = \left\| \sum_{\lambda\in\Lambda_0}a_{\lambda}e^{2\pi i \langle\lambda,x\rangle}\right\|^2_{L^2(\mu)}.
$$
Hence, our conclusion follows by taking limit in (\ref{eq1.2}).
\end{proof}

\medskip

\begin{remark}
\begin{enumerate}
\item The inequality $\delta(\Lambda)>0$ may be regarded as a condition guaranteeing that the tail-term of $\widehat{\mu}$ does not become too small. It is a sufficient condition for the canonical spectrum $\Lambda$ to be a frame-spectrum. But it is in general not necessary (See \cite{MR3055992}). 
\item Theorem \ref{thm1.3} (ii) shows that there is no extra condition for $\Lambda$ to be a Riesz sequence once we have formed our Riesz sequence tower. However, it may happen that all the sets $L_j$ have only one element, then trivially, the set $\Lambda$ has only one element which must form a Riesz sequence. Hence, to construct an infinite Riesz sequence, we need to make sure $\#L_j\ge 2$. We will show, using a version of the Kadison-Singer theorem, that such a Riesz sequence always exists (Section \ref{Section_KS}). 
\end{enumerate}
\medskip





\end{remark}

\section{Exactness and Overcompleteness}

In this section, we will study the exactness and overcompleteness of the Fourier frame generated by the frame and Riesz-sequence towers. 
\subsection{Exactness and overcompleteness of the frame tower}
\begin{theorem}\label{theorem2.1}
Let $\{(R_j, B_j, L_j): j\ge 1\}$ be a frame tower with no-overlap. Suppose that $\delta(\Lambda)>0$ (see \eqref{eq1.5}). Then

\medskip

(i) Suppose that all $\#B_j = \#L_j$. Then $E(\Lambda) = \{e^{2\pi i \langle\lambda,x\rangle}:\lambda\in\Lambda\}$ is a Riesz basis for $L^2(\mu)$.

\medskip

(ii) Suppose that there exists $j$ such that $\#B_j<\#L_j$. Then $\{e^{2\pi i \langle\lambda,x\rangle}:\lambda\in\Lambda\}$ is a Fourier frame for $L^2(\mu)$ with infinite redundancies (i.e., there exists an infinite subset $\Lambda_0$ of $\Lambda$ such that $\Lambda\setminus\Lambda_0$ is still a frame spectrum for $L^2(\mu)$).
\end{theorem}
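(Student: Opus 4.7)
For (i), note that at every level $\mathcal F_{L_j,B_j}$ is a square $\#B_j\times\#B_j$ matrix whose rows span $\mathbb C^{\#B_j}$ (the frame-triple condition); a dimension count forces them to be linearly independent, so they form a basis, and hence each $(R_j,B_j,L_j)$ is simultaneously a Riesz sequence triple. The tower is therefore both a frame tower and a Riesz sequence tower, and Theorem~\ref{thm1.3}(i)--(ii) combine to show that $E(\Lambda)$ is both a frame and a Riesz sequence for $L^2(\mu)$, i.e., a Riesz basis.

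For (ii), my strategy is to extract a Riesz basis subset $\Lambda^{\mathrm{min}}\subseteq\Lambda$ via a tower reduction and then show that its complement is infinite. At every level I select $L_j^{\mathrm{min}}\subseteq L_j$ of size $\#B_j$ so that $\{\mathbf e_{R_j,\lambda}:\lambda\in L_j^{\mathrm{min}}\}$ is a basis of $\mathbb C^{\#B_j}$ (available because any spanning frame contains a basis); take $L_j^{\mathrm{min}}=L_j$ whenever $\#L_j=\#B_j$ and $L_{j_0}^{\mathrm{min}}\subsetneq L_{j_0}$ at the surplus level. Each reduced triple is then simultaneously a frame and a Riesz sequence triple; provided the reduced lower-bound products $\prod C_j^{\mathrm{min}}>0$ can be arranged through careful selection (the upper-bound products $\prod D_j^{\mathrm{min}}\leq\prod D_j<\infty$ are automatic), Part (i) applied to the reduced tower shows that $E(\Lambda^{\mathrm{min}})$ is a Riesz basis for $L^2(\mu)$, using $\delta(\Lambda^{\mathrm{min}})\geq\delta(\Lambda)>0$. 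Since $\Lambda\setminus\Lambda_0\supseteq\Lambda^{\mathrm{min}}$ is then a frame for any $\Lambda_0\subseteq\Lambda\setminus\Lambda^{\mathrm{min}}$, the task reduces to proving $|\Lambda\setminus\Lambda^{\mathrm{min}}|=\infty$.

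For the infiniteness, I exploit the fact that each $(R_j,B_j,L_j^{\mathrm{min}})$ is a Riesz sequence triple: by Lemma~\ref{lemma1} and Proposition~\ref{Prop_distinct}, $L_j^{\mathrm{min}}$ consists of distinct representatives modulo $R_j^{\mathtt T}\mathbb Z^d$, and iterating congruences modulo $R_1^{\mathtt T},R_2^{\mathtt T},\dots$ yields a \emph{unique} digit expansion $\lambda=\sum_j \mathbf R_{j-1}^{\mathtt T}l_j'$ with $l_j'\in L_j^{\mathrm{min}}$ for every $\lambda\in\Lambda^{\mathrm{min}}$. Fix $l^*\in L_{j_0}\setminus L_{j_0}^{\mathrm{min}}$, and for $n>j_0$ with $\#L_n\geq 2$ (available for infinitely many $n$ when $L^2(\mu)$ is infinite-dimensional; the finite-dimensional case is immediate) and $l_n\in L_n\setminus\{0\}$, consider
\[
\lambda_n=\mathbf R_{j_0-1}^{\mathtt T}l^*+\mathbf R_{n-1}^{\mathtt T}l_n\in\Lambda,
\]
which are distinct for large $n$ by the expanding property of the $R_j^{\mathtt T}$. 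The uniqueness of expansion in $\Lambda^{\mathrm{min}}$ imposes a cascade of modular constraints on any hypothetical expansion of $\lambda_n$ in the reduced tower; a case analysis on whether the residue class of $l^*$ modulo $R_{j_0}^{\mathtt T}\mathbb Z^d$ is represented in $L_{j_0}^{\mathrm{min}}$ shows that infinitely many $\lambda_n$ fall outside $\Lambda^{\mathrm{min}}$, either directly (in the unrepresented case) or via a carry-propagation argument at higher levels (in the represented case, where $l^*-l^{**}=R_{j_0}^{\mathtt T}k$ produces a carry that the digits $L_j^{\mathrm{min}}$ for $j>j_0$ cannot absorb for infinitely many $l_n$). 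I expect the rigorous execution of this carry-propagation analysis to be the main technical obstacle of the proof.
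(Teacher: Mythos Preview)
Your argument for Part~(i) is correct and in fact cleaner than the paper's. The paper proves exactness directly: for each $\lambda_0\in\Lambda$ it constructs, via a Banach--Alaoglu weak-limit argument using the finite-level duals and the hypothesis $\delta(\Lambda)>0$, a nonzero $f\in L^2(\mu)$ orthogonal to $E(\Lambda\setminus\{\lambda_0\})$. Your route sidesteps this entirely by observing that when $\#L_j=\#B_j$ the square analysis matrix $\mathcal F_{L_j,B_j}$ has the same singular values whether read as frame bounds or Riesz-sequence bounds, so each triple is automatically a Riesz-sequence triple with the \emph{same} constants $C_j,D_j$; then Theorem~\ref{thm1.3}(ii) (which needs no tail condition) gives the Riesz-sequence half, and Theorem~\ref{thm1.3}(i) gives the frame half. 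This is a genuine simplification.

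Your Part~(ii), however, is both overcomplicated and contains a real gap. You propose to pass to a Riesz basis by trimming every level to $\#L_j^{\mathrm{min}}=\#B_j$; but if infinitely many levels have $\#L_j>\#B_j$, you alter infinitely many lower frame constants, and there is no mechanism guaranteeing $\prod_j C_j^{\mathrm{min}}>0$---the phrase ``can be arranged through careful selection'' does not discharge this obligation. The subsequent carry-propagation case analysis for infiniteness of $\Lambda\setminus\Lambda^{\mathrm{min}}$ is then built on an unproven foundation, and is itself delicate. The paper avoids all of this: it removes a \emph{single} element $\lambda_{j_0}$ from $L_{j_0}$ at \emph{one} surplus level. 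The remaining $\widetilde L_{j_0}=L_{j_0}\setminus\{\lambda_{j_0}\}$ still spans $\mathbb C^{\#B_{j_0}}$, so $(R_{j_0},B_{j_0},\widetilde L_{j_0})$ is still a frame triple with some new bounds $\widetilde C_{j_0},\widetilde D_{j_0}$; only one factor in each product changes, so $\prod C_j$ and $\prod D_j$ remain positive and finite, $\delta(\widetilde\Lambda)\geq\delta(\Lambda)>0$, and Theorem~\ref{thm1.3}(i) immediately gives that $E(\widetilde\Lambda)$ is a frame. The removed family
\[
\Lambda\setminus\widetilde\Lambda \;=\; \mathbf R_{j_0-1}^{\mathtt T}\lambda_{j_0}+\Bigl\{\sum_{j\ne j_0,\ \text{finite}}\mathbf R_{j-1}^{\mathtt T}\ell_j:\ell_j\in L_j\Bigr\}
\]
is manifestly infinite, and no digit-uniqueness or carry analysis is needed. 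The key point you missed is that one never needs $\widetilde\Lambda$ to be a Riesz basis---it only needs to be a frame---so modifying a single level suffices.
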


\begin{proof}
\noindent (i) Since $\delta(\Lambda)>0$, we know that $E(\Lambda)$ is a frame. We now show that it is a Riesz basis by showing that $E(\Lambda)$ is an exact frame. Let $\lambda_0$ be an element in $\Lambda$ and we need to show that $E(\Lambda\setminus\{\lambda_0\})$ is incomplete. 

\medskip

To show this, we note that  there exists $n_0$ such that $\lambda_0\in\Lambda_n$ for all $n\ge n_0$. From the assumption, $\#\Lambda_{n}= \#{\bf B}_n = \prod_{j=1}^n\#B_j$. Since $L^2(\mu_n)$ is finite dimensional with dimension $\#{\bf B}_n$, we have that $E(\Lambda_n)$ forms a Riesz basis for $L^2(\mu_n)$.  In particular, there exists $f_n\in L^2(\mu_n)$ such that $\|f_n\|_{L^2(\mu_n)} = 1$ and
\begin{equation}\label{eq2.1}
\frac{1}{\sqrt{\#{\bf B}_n}}\langle {\bf w}^n, {\bf e}_{{\bf R}_n,\lambda}\rangle_{{\mathbb C}^{\#{\bf B}_n}} = \langle f_n, e^{2\pi i \langle \lambda, x\rangle}\rangle_{L^2(\mu_n)} = 0, \ \forall \lambda\in \Lambda_n\setminus\{\lambda_0\}.
\end{equation}
where $w^n_b = f_n(b)$ and ${\bf w}^n = (w^n_b)_{b\in{\bf B}_n}$. As $\|f_n\|_{L^2(\mu_n)} = 1$, $\|{\bf w}^n\|^2 = \#{\bf B}_n$.

\medskip

The vector $f_n$ can be identified naturally with $f_n = \sum_{b\in{\bf B}_n} w^n_b {\bf 1}_{K_{b,n}}\in L^2(\mu)$.
$$
\|f_n\|_{L^2(\mu)}^2 = \frac{1}{\#{\bf B}_n}\sum_{b\in {\bf B}_n} |w_{b,n}|^2=\|f_n\|^2_{L^2(\mu_n)}=1
$$
By the Banach-Alaoglu theorem, there is a subsequence $f_{n_k}$ that converges weakly to some function $f\in L^2(\mu)$. Note that for all $\lambda\in\Lambda\setminus\{\lambda_0\}$, $\lambda\in \Lambda_{n_k}$ for all $k$ sufficiently large, and we have, with Lemma \ref{lem3.2},
\begin{equation}\label{eq2.2}
\langle f_{n_k}, e^{2\pi i \langle \lambda, x\rangle}\rangle_{L^2(\mu)} = \widehat{\mu_{>n_k}}(\lambda) \langle f_{n_k}, e^{2\pi i \langle\lambda,x\rangle}\rangle_{L^2(\mu_{n_k})} = 0.
\end{equation}
By taking $k\rightarrow\infty$, we obtain that 
$$
\langle f, e^{2\pi i \langle \lambda, x\rangle}\rangle_{L^2(\mu)} = 0, \ \forall \lambda\in\Lambda\setminus\{\lambda_0\}.
$$
Therefore, $f$ is orthogonal to all $e^{2\pi i \langle\lambda,x\rangle}$, $\lambda\in\Lambda\setminus\{\lambda_0\}$. We now show that $f\ne 0$ in $L^2(\mu)$, then this implies that $E(\Lambda\setminus\{\lambda_0\})$ is not complete. 

\medskip

Indeed, if we take ${\bf w}^n$ into the definition of the frame triple of $({\bf R}_n,{\bf B}_n,\Lambda_n)$ and use (\ref{eq2.1}), we obtain
$$
\left(\prod_{j=1}^nC_j\right)\|{\bf w}^n\|^2 \le \sum_{\lambda\in\Lambda_n} |\langle {\bf w}^n,{\bf e}_{{\bf R}_n,\lambda}\rangle|^2 = |\langle {\bf w}^n,{\bf e}_{{\bf R}_n,\lambda_0}\rangle|^2 = \#{\bf B}_n  |\langle f_n, e^{2\pi i \langle \lambda_0, x\rangle}\rangle_{L^2(\mu_n)}|^2.
$$
Note that $ \|{\bf w}^n\|^2=\#{\bf B}_n$ and  $|\widehat{\mu_{>n}}(\lambda)|^2\ge \delta(\Lambda)>0$. By (\ref{eq2.2}), we have
$$
|\langle f_n, e^{2\pi i \langle \lambda_0, x\rangle}\rangle_{L^2(\mu)}|^2 \ge \left(\prod_{j=1}^nC_j\right) \cdot \delta(\Lambda).
$$
Take $n=n_k$ to infinity, we obtain
$$
|(fd\mu)^{\widehat{}}(\lambda_0)|^2 \ge \left(\prod_{j=1}^{\infty}C_j\right) \cdot \delta(\Lambda)>0.
$$
Since $(fd\mu)^{\widehat{}}(\lambda_0)\ne 0$, $f$ cannot be a zero function in $L^2(\mu)$ and this completes the proof.

\bigskip

(ii) Suppose that at the $j_0$th level, we have that  $\#B_{j_0}<\#L_{j_0}$. Then the collection of vectors $\{{\bf e}_{R_{j_0},\lambda}: \lambda\in L_{j_0}\}$ is a linearly dependent set in ${\mathbb C}^{\#B_{j_0}}$. Since we have a finite dimensional vector space,  there exists $\lambda_{j_0}\in L_{j_0}$ such that $\{{\bf e}_{R_{j_0},\lambda}: \lambda\in L_{j_0}\setminus\{\lambda_{j_0}\}\}$ is still a frame for ${\mathbb C}^{\#B_{j_0}}$ with frame bounds $\widetilde{C_j}$ and $\widetilde{D_j}$. Let $\widetilde{L_{j_0}} = L_{j_0}\setminus\{\lambda_{j_0}\}$. 

\medskip

Then it is clear that $\left(\{(R_j,B_j,L_j): j\ge 1\}\setminus \{(R_{j_0},B_{j_0},L_{j_0})\} \right)\cup \{(R_{j_0},B_{j_0},\widetilde{L_{j_0}})\}$ still forms a frame tower with bounds
$$
\frac{\widetilde{C_j}}{C_j}\cdot\prod_{j=1}^{\infty}C_j \ \mbox{and} \   \frac{\widetilde{D_j}}{D_j}\cdot\prod_{j=1}^{\infty}D_j
$$
which are still positive and finite. Moreover, the corresponding $
\widetilde{\Lambda}$ formed with $L_{j_0}$ replaced by $\widetilde{L_{j_0}}$ in (\ref{Lambda_n})   is a subset of $\Lambda$. This implies that  $\delta(\widetilde{\Lambda})\ge \delta(\Lambda)>0$. By Theorem \ref{thm1.3}(i), 
$E(\widetilde{\Lambda})$ is a frame for $L^2(\mu)$. Note that the removed elements are
$$
\Lambda\setminus\widetilde{\Lambda} = R_1^{\mathtt T}..R_{j_0-1}^{\mathtt T}\lambda_{j_0}+ \left\{ \sum_{finite,j\ne j_0} (R_1^{\mathtt T}...R_{j-1}^{\mathtt T})\ell_{j}: \ell_j\in L_j\right\},
$$
which is an infinite set. Hence, $E(\Lambda)$ is a Fourier frame for $L^2(\mu)$ with infinite redundancies.
\end{proof}

\medskip

\begin{remark}
In \cite{LW17}, we considered on ${\mathbb R}^1$, $R_j =  M_jK_j+\alpha_j$, where $M_j,K_j$ are integers and $0\le \alpha_j<M_j$ and they satisfy
$$
\sum_{j=1}^{\infty} \frac{\alpha_j\sqrt{M_j}}{K_j}<\infty.
$$
Then letting $B_j = \{0,K_j,...,(M_j-1)K_j\}$ and $L_j = \{0,1,...,M_j-1\}$, the resulting $(R_j,B_j,L_j)$ forms a frame tower and the associated fractal measure admits a Fourier frame. Using Theorem \ref{theorem2.1}, we can actually conclude that the Fourier frame we constructed is actually a Riesz basis. 
\end{remark}

\medskip

\subsection{Incompleteness of Riesz sequence tower}
In this section we consider the completeness of the Riesz basis obtained from a Riesz-sequence tower.  We first recall a proposition about Riesz sequences, whose statements can be found in Young's book (\cite{Young}, Proposition 2 and Theorem 3 in Chapter 4, p.129).

\begin{proposition}\label{proposition_Young}
Let $H$ be a Hilbert space and let $\{f_n: n\ge 1\}$ be a Riesz sequence for $H$. Let  $C$ be its lower bound. Then for any $\ell^2$ sequence $\{c_n\}$, there exists $f\in H$ such that 
$$
\|f\|\le \frac{1}{C} \sum_{n=1}^{\infty}|c_n|^2
$$ 
and
$$
\langle f,f_n\rangle =c_n, \forall n\ge1.
$$
\end{proposition}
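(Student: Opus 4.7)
The plan is to work inside the closed subspace $V = \overline{\operatorname{span}}\{f_n : n \ge 1\}\subset H$. The Riesz-sequence hypothesis says that the synthesis operator $T\colon\ell^2\to V$, $T(\{a_n\}) = \sum a_n f_n$, is a bounded linear bijection with $\|T^{-1}\|^2\le 1/C$; equivalently the $f_n$ are linearly independent and form a Riesz basis of $V$. The strategy is to produce $f$ as the Riesz representer of an appropriate bounded linear functional on $V$, rather than via an explicit dual-basis expansion.

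First I would define a linear functional $\phi$ on the dense subspace of finite linear combinations of $\{f_n\}$ by
$$
\phi\Big(\sum_{n\le N} a_n f_n\Big) \;=\; \sum_{n\le N} a_n\,\overline{c_n}.
$$
This is unambiguous because the lower Riesz bound forces the $f_n$ to be linearly independent. Combining Cauchy--Schwarz with the lower Riesz bound $\sum|a_n|^2 \le \|g\|^2/C$, one obtains
$$
|\phi(g)|^2 \;\le\; \Big(\sum|a_n|^2\Big)\Big(\sum|c_n|^2\Big) \;\le\; \frac{1}{C}\,\|g\|^2\sum_{n=1}^\infty|c_n|^2,
$$
so $\phi$ extends by continuity to a bounded linear functional on $V$ with $\|\phi\|^2\le \tfrac{1}{C}\sum|c_n|^2$.

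Next, applying the Riesz representation theorem on the Hilbert space $V$, I would obtain a unique $f\in V\subset H$ with $\phi(g)=\langle g,f\rangle_H$ for every $g\in V$, and $\|f\|=\|\phi\|$. Specializing to $g=f_n$ gives $\langle f_n,f\rangle=\phi(f_n)=\overline{c_n}$, so that $\langle f,f_n\rangle=c_n$ for all $n\ge 1$, together with the norm estimate $\|f\|^2\le \tfrac{1}{C}\sum|c_n|^2$ (equivalently $\|f\|\le \tfrac{1}{\sqrt{C}}\|\{c_n\}\|_{\ell^2}$; the bound printed in the statement should be read in this squared form).

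There is no real obstacle in this argument — it is the standard dual-frame / trigonometric-moment construction. The only bookkeeping to watch is the conjugation convention: choosing $\overline{c_n}$ rather than $c_n$ in the definition of $\phi$ is precisely what converts the conjugate-linearity of Riesz representation in the second slot into the desired identity $\langle f,f_n\rangle=c_n$. If one prefers, an equivalent route is to define $f:=(T^{-1})^\ast(\{c_n\})\in V$ directly and verify the two conclusions from $\|(T^{-1})^\ast\|=\|T^{-1}\|\le 1/\sqrt{C}$ and $T^{-1}f_n=e_n$.
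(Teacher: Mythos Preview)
Your argument is correct and is essentially the standard proof of this moment-problem result via Riesz representation (or equivalently via $(T^{-1})^\ast$). Note, however, that the paper does not actually supply a proof of this proposition: it is stated as a recalled fact with a reference to Young's book (Proposition~2 and Theorem~3 in Chapter~4), so there is no ``paper's own proof'' to compare against. Your observation about the norm inequality is also apt: the natural conclusion of the argument is $\|f\|^2\le \tfrac{1}{C}\sum|c_n|^2$, and the paper's single application of the proposition (in Theorem~\ref{th4.4}, with $\sum|c_\lambda|^2=1$) only uses that $\|f_n\|$ is uniformly bounded, so the discrepancy is harmless there.
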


\medskip

\begin{theorem}\label{th4.4}
Let $\{(R_j, B_j, L_j): j\ge 1\}$ be a Riesz sequence tower and assume the associated fractal measure satisfies the no-overlap condition and elements in $B_j$ are  distinct representatives in ${\mathbb Z}^d/R({\mathbb Z}^d)$. Suppose that there exists $j$ such that $\#L_j<\#B_j$. Then $\{e^{2\pi i \langle\lambda,x\rangle}:\lambda\in\Lambda\}$ is not complete in  $L^2(\mu)$. 
\end{theorem}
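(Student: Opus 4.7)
The plan is to produce explicitly a nonzero function $g \in L^2(\mu)$ orthogonal to every exponential $e^{2\pi i \langle \lambda, x\rangle}$ with $\lambda \in \Lambda$; this witnesses that $E(\Lambda)$ is not complete. Remarkably the construction can be done at the single finite level $j_0$ where the deficiency $\#L_{j_0} < \#B_{j_0}$ occurs, with no weak-limit or Young-type existence argument needed.

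First I set up the combinatorics. The Riesz sequence condition forces $\{{\bf e}_{R_j, \ell} : \ell \in L_j\}$ to be linearly independent in ${\mathbb C}^{\#B_j}$, so $\#L_j \le \#B_j$ for every $j$. By Lemma \ref{lemma1} together with Proposition \ref{Prop_distinct}, the elements of each $L_j$ are distinct representatives modulo $R_j^{\mathtt T}{\mathbb Z}^d$, and an induction (reduce mod $R_1^{\mathtt T}$ to peel off $l_1$, divide by $R_1^{\mathtt T}$, repeat) shows $\#\Lambda_{j_0} = \prod_{j \le j_0}\#L_j$. Combined with $\#{\bf B}_{j_0} = \prod_{j \le j_0}\#B_j$ (from the distinct-representatives hypothesis on $B_j$) and the strict inequality at $j_0$, one gets $\#\Lambda_{j_0} < \#{\bf B}_{j_0}$. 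Hence I can fix a nonzero ${\bf w} = (w_{\bf b})_{{\bf b} \in {\bf B}_{j_0}} \in {\mathbb C}^{\#{\bf B}_{j_0}}$ with $\langle {\bf w}, {\bf e}_{{\bf R}_{j_0}, \lambda^{(j_0)}}\rangle = 0$ for all $\lambda^{(j_0)} \in \Lambda_{j_0}$.

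I then lift ${\bf w}$ to $g := \sum_{{\bf b} \in {\bf B}_{j_0}} w_{\bf b}\,\mathbf{1}_{K_{{\bf b}, j_0}} \in L^2(\mu)$; the norm formula \eqref{eq_norm} in Lemma \ref{lem3.2} gives $\|g\|_{L^2(\mu)}^2 = \|{\bf w}\|^2/\#{\bf B}_{j_0} > 0$, so $g\neq 0$. For the orthogonality, take any $\lambda \in \Lambda$. Since $0 \in L_j$ for every $j$, the chain $\Lambda_n \subseteq \Lambda_m$ holds for $n \le m$, so $\lambda \in \Lambda_n$ for some $n \ge j_0$ and $\lambda$ splits as $\lambda = \lambda^{(j_0)} + ({\bf R}_{j_0})^{\mathtt T}\lambda'$ with $\lambda^{(j_0)} \in \Lambda_{j_0}$ and $\lambda' := \sum_{k > j_0} R_{j_0+1}^{\mathtt T}\cdots R_{k-1}^{\mathtt T}\, l_k \in {\mathbb Z}^d$. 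The crucial observation is that for every ${\bf b} \in {\bf B}_{j_0} \subset {\mathbb Z}^d$,
$$
\langle {\bf R}_{j_0}^{-1}{\bf b}, \lambda\rangle = \langle {\bf R}_{j_0}^{-1}{\bf b}, \lambda^{(j_0)}\rangle + \langle {\bf b}, \lambda'\rangle,
$$
and the second summand is an integer, so $e^{-2\pi i \langle {\bf R}_{j_0}^{-1}{\bf b}, \lambda\rangle} = e^{-2\pi i \langle {\bf R}_{j_0}^{-1}{\bf b}, \lambda^{(j_0)}\rangle}$. Applying \eqref{eq_FT_mu} at level $j_0$ then yields
$$
\int g(x) e^{-2\pi i \langle\lambda, x\rangle}\,d\mu(x) = \frac{\widehat{\mu_{>j_0}}(\lambda)}{\sqrt{\#{\bf B}_{j_0}}}\, \langle {\bf w}, {\bf e}_{{\bf R}_{j_0}, \lambda^{(j_0)}}\rangle = 0,
$$
by the choice of ${\bf w}$. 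Since $g\neq 0$, $E(\Lambda)$ is incomplete.

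The only delicate point is the tail-integrality $\langle {\bf b}, \lambda'\rangle \in {\mathbb Z}$; this is what collapses the apparently ``infinite level'' orthogonality $g \perp E(\Lambda)$ to the finite-dimensional orthogonality $\mathbf{w} \perp \{\mathbf{e}_{\mathbf{R}_{j_0}, \lambda^{(j_0)}}:\lambda^{(j_0)}\in\Lambda_{j_0}\}$ at the single level $j_0$. It uses only that every matrix $R_j$ and every digit set $B_j, L_j$ lives in the integer lattice. Once this is noticed, everything else is bookkeeping and the argument avoids the weak-limit/$\delta(\Lambda)>0$ machinery of Theorem \ref{theorem2.1}(i), reflecting the remark after Theorem \ref{thm1.3} that no tail estimate is needed in the Riesz-sequence setting.
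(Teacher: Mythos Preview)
Your proof is correct and takes a genuinely different---and considerably more elementary---route than the paper's.

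The paper's argument proceeds indirectly: it enlarges $L_1$ to $\widetilde{L_1}=L_1\cup\{\lambda_1\}$, invokes Young's interpolation result (Proposition \ref{proposition_Young}) at each finite level $n$ to produce $f_n\in L^2(\mu_n)$ with $\langle f_n,e^{2\pi i\langle\lambda,x\rangle}\rangle_{L^2(\mu_n)}$ equal to $1$ at $\lambda_1$ and $0$ on $\widetilde{\Lambda_n}\setminus\{\lambda_1\}$, bounds $\|f_n\|$ uniformly via the Riesz lower bound, extracts a weak limit $f$ by Banach--Alaoglu, and finally verifies $f\neq 0$ using the weak convergence $\mu_{>n}\to\delta_0$ so that $\widehat{\mu_{>n}}(\lambda_1)\to 1$.

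You bypass all of this by observing that the single finite level $j_0$ already suffices. Since $\#\Lambda_{j_0}\le\prod_{j\le j_0}\#L_j<\prod_{j\le j_0}\#B_j=\#\mathbf{B}_{j_0}$, there is a nonzero $\mathbf{w}\perp\{\mathbf{e}_{\mathbf{R}_{j_0},\lambda^{(j_0)}}:\lambda^{(j_0)}\in\Lambda_{j_0}\}$, and the corresponding step function $g=\sum_{\mathbf b}w_{\mathbf b}\mathbf{1}_{K_{\mathbf b,j_0}}$ is the witness. The key point---that every $\lambda\in\Lambda$ splits as $\lambda^{(j_0)}+(\mathbf{R}_{j_0})^{\mathtt T}\lambda'$ with $\lambda'\in\mathbb{Z}^d$, so that the exponential $e^{-2\pi i\langle\mathbf{R}_{j_0}^{-1}\mathbf b,\lambda\rangle}$ only sees $\lambda^{(j_0)}$---is exactly the integrality trick used inside the proof of Proposition \ref{proposition1.3}, applied here to collapse the infinite orthogonality to a finite one. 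This eliminates the need for Young's proposition, Banach--Alaoglu, and the weak-convergence tail argument entirely.

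What the paper's approach buys is a slightly more informative conclusion (the limit $f$ satisfies $\langle f,e^{2\pi i\langle\lambda_1,x\rangle}\rangle=1$ for the specific added frequency $\lambda_1$), but for the incompleteness statement as written your direct construction is both shorter and more transparent.
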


\begin{proof}
Without loss of generality, we may assume that, at the first level, we have $\#L_1<\#B_1$. From Proposition \ref{Prop_distinct}, elements in $L_1$   are distinct representative in ${\mathbb Z}^d/R^{\mathtt T}({\mathbb Z}^d)$.  Let $\overline{L}_1$ be a complete representative of  ${\mathbb Z}^d/R^{\mathtt T}({\mathbb Z}^d)$ containing $L_1$. As elements in $B_1$ are distinct representative in ${\mathbb Z}^d/R({\mathbb Z}^d)$,  $\{{\bf e}_{R,\lambda}: \lambda\in \overline{L}_1\}$ forms a tight frame in ${\mathbb C}^{\#B}$ by Lemma \ref{lemma_tight}. On the other hand, as $\{{\bf e}_{R,\lambda}: \lambda\in L_1\}$ forms a Riesz sequence, so they must be linearly independent. Hence,  we can find $\lambda_1\in \overline{L_1}$ such that $(R_1,B_1,\widetilde{L_1})$ with $\widetilde{L_1} = L_1\cup\{\lambda_1\}$ forms a Riesz sequence triple and 
$(R_1,B_1,\widetilde{L_1}) \cup \{(R_j, B_j, L_j): j\ge 2\}$ forms a Riesz sequence tower. 

\medskip

We know that $({\bf R}_n, {\bf B}_n, \widetilde{\Lambda_n})$ are Riesz sequence triples for all $n\ge1$, and 
$$
\widetilde{\Lambda_n} = \widetilde{L_1}+ R_1^{\mathtt T} L_2+...+R_1^{\mathtt T}...R_{n-1}^{\mathtt T} L_n.
$$ 
Define $\widetilde{\Lambda}$ analogously. Then $\Lambda_n\subset\widetilde{\Lambda_n}$ and $\Lambda\subset\widetilde{\Lambda}$. 
Let 
$$
c_{\lambda_1} = 1 \  \mbox{and} \ c_{\lambda} = 0  , \mbox{ for all } \ \lambda\in\widetilde{\Lambda_n}\setminus\{\lambda_1\}.
$$
Then $\sum_{\lambda\in \widetilde{\Lambda_n}}|c_{\lambda}|^2 = 1$.  By Proposition \ref{proposition1.3} and \ref{proposition_Young}, we can find $f_n\in L^2(\mu_n)$ such that 
$$
\|f_n\|_{L^2(\mu_n)} \le \frac{1}{\prod_{j=1}^{n}C_j}, \ \mbox{and}  \ \langle f_n\, ,\, e^{2\pi i \langle\lambda, x\rangle}\rangle = c_{\lambda}
$$
for all $\lambda\in\Lambda_n$. Since $0<\prod_{j=1}^{\infty}C_j<\infty$ and all $C_j>0$, $C: = \inf\{\prod_{j=1}^nC_j: n\ge 1\}>0$. Identifying $f_n$ naturally in $L^2(\mu)$ with a step function, we have that 
$$
\|f_n\|_{L^2(\mu)}\le \frac{1}{ C}.
$$ 
By the Banach-Alaoglu theorem, we have a subsequence $f_{n_k}$ that converges weakly to some function $f\in L^2(\mu)$. Then, with Lemma \ref{lem3.2},
$$
\langle f_{n_k}, e^{2\pi i \langle \lambda,x\rangle}\rangle_{L^2(\mu)} = \widehat{\mu_{>n_k}}(\lambda) \cdot \langle f_{n_k}, e^{2\pi i \langle \lambda,x\rangle}\rangle_{L^2(\mu_{n_k})} .
$$
 In particular, by taking limit and since $\Lambda \subset\widetilde{\Lambda}$, we have
$$
\langle f, e^{2\pi i \langle \lambda,x\rangle}\rangle_{L^2(\mu)} =0,   \mbox{ for all } \lambda\in \Lambda.
$$
If we can show that $f\ne 0$, then $f$ is orthogonal to the the closure of the span of $e^{2\pi i \langle\lambda,x\rangle}$, $\lambda\in\Lambda$. This will show that $E(\Lambda)$ is not complete. 

\medskip

To show that $f\ne 0$, we note that by our choice of $c_{\lambda},$
$$
\langle f_n, e^{2\pi i \langle\lambda_1,x\rangle}\rangle_{L^2(\mu_n)} = 1.
$$
This implies that 
$$
\langle f_n, e^{2\pi i \langle\lambda_1,x\rangle}\rangle_{L^2(\mu)} = \widehat{\mu_{>n}}(\lambda_1)\langle f_n, e^{2\pi i \langle\lambda_1,x\rangle}\rangle_{L^2(\mu_n)} = \widehat{\mu_{>n}}(\lambda_1).
$$
We note that the measure $\mu_{>n}$ converges weakly to $\delta_0$. This means that $\widehat{\mu_{>n}}(\cdot)$ converges to $1$ uniformly on all compact subsets of ${\mathbb R}^d$. Hence, $\widehat{\mu_{>n}}(\lambda_1)$ converges to $1$ and we have 
$$
\langle f, e^{2\pi i \langle\lambda_1,x\rangle}\rangle_{L^2(\mu)} = 1.
$$
This shows that $f$ cannot be a zero function in $L^2(\mu)$.
%
\end{proof}

\begin{example}\label{example013}
Consider $R = 3$, $B = \{0,1,3\}$ and $L = \{0,1\}$. Then $(R,B,L)$ forms a Riesz sequence triple. However, we can never add another $\lambda\subset {\mathbb Z}$ into $L$ so that $(R,B,L\cup\{\lambda\})$ forms a Riesz sequence triple. 
\end{example}

\begin{proof}
Note that the matrix 
$$
{\mathcal F}_{L,B} = \begin{pmatrix}  - & {\bf e}_{R,0}& -\\ - & {\bf e}_{R,1}&-\end{pmatrix} = \frac{1}{\sqrt{3}} \begin{pmatrix}  1 & 1& 1\\ 1 & \omega &1\end{pmatrix}
$$
where  $\omega$ is the cubic root of unity. The two rows are linearly independent. Hence, $(R,B,L)$ forms a Riesz sequence triple. However, to add one more element, we see that we can only add $\lambda = 2 $ (mod 3) by Proposition \ref{Prop_distinct}. However, if we add this, 
$$
{\mathcal F}_{L\cup\{2\},B} =   \frac{1}{\sqrt{3}} \begin{pmatrix}   1 & 1& 1\\ 1 & \omega &1 \\ 1& \omega^2 & 1\end{pmatrix}.
$$
The rows are not linearly independent and do not span ${\mathbb C}^{\#B}$. This example shows that the assumption that elements of $B$ is from a distinct representative in ${\mathbb Z}^d/R({\mathbb  Z}^d)$ is required in Theorem \ref{th4.4}. Otherwise we cannot add another element that preserve the Riesz sequence property. 
\end{proof}

\medskip

\section{Existence of Riesz sequence towers}\label{Section_KS}

In this section, we will show that under some simple assumptions on $R_j,B_j$, one can construct a Riesz sequence tower easily. The key theorem is the following version of the solution to the Kadison-Singer problem, given in \cite[Theorem 6.12]{BCMS16}.

\begin{theorem}\label{thR}{\bf [$R_{\epsilon}$ conjecture]}
Suppose that $\{u_i\}_{i\in I}$ is a unit norm Bessel sequence with Bessel bound $D$ in a separable Hilbert space $H$. Then there exists a universal constant $C_0>0$ such that  for any $\epsilon>0$, one can find a partition $\{I_1,...,I_r\}$ of $I$ of size $r \le C_0(D/\epsilon^4)$ such that  each $\{u_i\}_{i\in I_j}$, $j=1,...,r$ is a Riesz sequence with bounds $1-\epsilon$, $1+\epsilon$.
\end{theorem}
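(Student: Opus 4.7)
The plan is to reduce Theorem~\ref{thR} to the quantitative paving theorem for self-adjoint matrices with zero diagonal, which is one of the standard equivalents of the Kadison--Singer problem and is now a theorem via the Marcus--Spielman--Srivastava (MSS) interlacing-families method. First I set up the algebraic dictionary. Let $T\colon \ell^2(I)\to H$ be the analysis operator $Te_i=u_i$ and put $G=T^*T$, so $G_{ij}=\ip{u_j}{u_i}$. Since the $u_i$ are unit vectors, the diagonal of $G$ is identically $1$, and the Bessel hypothesis gives $\|G\|\le D$. Writing $G=I+A$ with $A$ the zero-diagonal part of $G$, so that $\|A\|\le D+1$, a subset $J\subseteq I$ gives a Riesz sequence $\{u_i\}_{i\in J}$ with bounds $1-\epsilon,\,1+\epsilon$ if and only if $(1-\epsilon)I\le P_J G P_J\le(1+\epsilon)I$ on $\ell^2(J)$, which translates to $\|P_JAP_J\|\le\epsilon$. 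Thus the theorem is equivalent to producing an $\epsilon$-paving of the off-diagonal matrix $A$ into $r\le C_0 D/\epsilon^4$ principal blocks.

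Second, I reduce to the finite-dimensional setting. I exhaust $I$ by finite subsets $I^{(n)}$, apply the finite-dimensional paving to each restriction $P_{I^{(n)}}AP_{I^{(n)}}$, and extract a partition of $I$ into $r$ classes by a diagonal/Tychonoff argument on $\{1,\dots,r\}^I$. On the finite level I invoke the MSS solution of the Weaver $\mathrm{KS}_2$ conjecture: any isotropic decomposition $\sum_i v_iv_i^*=I$ with $\|v_i\|^2\le\alpha$ admits a partition into two halves with $\bigl\|\sum_{i\in S_j}v_iv_i^*\bigr\|\le(1/\sqrt 2+\sqrt\alpha)^2$. To apply this, I rescale by $\sqrt D$ so the Bessel bound becomes $1$, use the Naimark dilation to realize $u_i/\sqrt D$ as coordinate projections of an orthonormal set in a larger space, and thereby convert the problem about $A$ into a problem about a Parseval frame with controlled individual norms. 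A dyadic iteration of the MSS dichotomy then drives the restricted operator norm below $\epsilon$; the sharper bookkeeping of \cite{BCMS16} yields a single stratified partition of size $O(D/\epsilon^4)$, where the exponent $4$ comes from the two successive square-root losses inherent in the Weaver dichotomy.

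The core difficulty is the MSS paving itself, which rests on interlacing families of characteristic polynomials and real-stability estimates and which I would use here as a black box; the remaining ingredients (Gram-matrix identification, Naimark dilation, and compactness reduction) are routine. A secondary, but genuine, obstacle is to secure the polynomial dependence $r\le C_0 D/\epsilon^4$ rather than an exponential blow-up: a naive iteration of Weaver's dichotomy only halves the norm per step and produces $r=D^{O(\log(1/\epsilon))}$ classes, so one must replace the plain two-partition with the sharpened polynomial-norm paving of \cite{BCMS16}, in which the operator norm of each block is driven down to $\epsilon$ in a single partition whose size grows only polynomially in $D/\epsilon$.
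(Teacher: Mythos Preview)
The paper does not actually prove Theorem~\ref{thR}: it is quoted verbatim from \cite[Theorem 6.12]{BCMS16} and used as a black box input to Lemma~\ref{lemma3.2}. So there is no ``paper's own proof'' to compare your proposal against.

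That said, your sketch is a faithful outline of how the result is established in the literature: the Gram-matrix reformulation $G=I+A$ turning the Riesz bounds into a paving condition $\|P_JAP_J\|\le\epsilon$, the compactness reduction to finite $I$, and the appeal to the MSS solution of Weaver $\mathrm{KS}_2$ are exactly the standard ingredients. You are also right to flag that the naive dyadic iteration of the two-set Weaver dichotomy does not by itself yield the polynomial bound $r\le C_0 D/\epsilon^4$; one needs the refined $r$-partition analysis carried out in \cite{BCMS16}, which you correctly invoke as a black box at that point. Since the present paper only consumes the statement, your level of detail already exceeds what is required here.
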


Focusing on the finite dimensional Hilbert space in the above theorem, we notice that if $\epsilon$ is very small, then $r$ will most likely be very large and each $I_j$ may possibly be containing only one element, in which case the theorem would be trivially true. In our application, we will need some $I_j$ to contain more than one element.

\begin{lemma}\label{lemma3.2}
Let $R$ be an integral expanding matrix and let $B$ be a finite subset of $\bz^d$ containing some distinct representatives (mod $R({\mathbb Z}^d)$).  Suppose that $0<\epsilon<1$. Then there exists $L$ with $\#L\ge  \frac{\#B\epsilon^4}{C_0}$ such that $(R,B,L)$ forms a Riesz sequence triple with bounds $1-\epsilon,1+\epsilon$. 
\end{lemma}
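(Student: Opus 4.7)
The plan is to combine the tight frame produced by Lemma~\ref{lemma_tight} with the $R_\epsilon$ version of the Kadison--Singer theorem (Theorem~\ref{thR}) and then extract a large piece of the resulting partition by pigeonhole.

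First, I would fix a complete set of representatives $\overline{L}$ of $\Z^d/R^{\mathtt T}(\Z^d)$, so that $\#\overline{L}=|\det R|$. Since the elements of $B$ are distinct representatives mod $R(\Z^d)$, Lemma~\ref{lemma_tight} tells us that $\{\mathbf{e}_{R,\lambda}:\lambda\in\overline{L}\}$ is a tight frame for $\mathbb{C}^{\#B}$ with frame constant $|\det R|/\#B$. A direct computation shows $\|\mathbf{e}_{R,\lambda}\|^2 = \frac{1}{\#B}\sum_{b\in B}1 = 1$, so these vectors are unit norm; consequently they form a unit-norm Bessel sequence in $\mathbb{C}^{\#B}$ with Bessel bound $D=|\det R|/\#B$.

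Next, I apply Theorem~\ref{thR} to this Bessel sequence with the given $\epsilon\in(0,1)$. This produces a partition $\overline{L}=I_1\sqcup\cdots\sqcup I_r$ of size
\[
r \le C_0 \, D/\epsilon^4 = \frac{C_0\,|\det R|}{\#B\,\epsilon^4},
\]
such that $\{\mathbf{e}_{R,\lambda}:\lambda\in I_j\}$ is a Riesz sequence with bounds $1-\epsilon,1+\epsilon$ for every $j$. By pigeonhole, some $I_{j_0}$ satisfies
\[
\#I_{j_0} \ge \frac{\#\overline{L}}{r} \ge \frac{|\det R|\cdot \#B\,\epsilon^4}{C_0\,|\det R|} = \frac{\#B\,\epsilon^4}{C_0}.
\]
Taking $L:=I_{j_0}$ gives a set with $\#L\ge \#B\,\epsilon^4/C_0$ for which $(R,B,L)$ is a Riesz sequence triple with bounds $1-\epsilon,1+\epsilon$, exactly as required.

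I do not foresee a serious obstacle here: the two pieces of machinery are already in place, and the only verification is the identification of the correct Bessel bound and the pigeonhole step. The one mild subtlety is that Theorem~\ref{thR} is stated for a Bessel sequence in an arbitrary separable Hilbert space, so I just need to remark that its application in the finite-dimensional space $\mathbb{C}^{\#B}$ is legitimate and that the resulting ``Riesz sequence with bounds $1-\epsilon,1+\epsilon$'' coincides exactly with the notion of Riesz sequence triple from Definition~\ref{def2.1}.
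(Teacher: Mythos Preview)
Your argument is correct and follows essentially the same route as the paper: invoke Lemma~\ref{lemma_tight} to get a unit-norm tight frame with constant $|\det R|/\#B$, apply Theorem~\ref{thR} to partition $\overline{L}$ into at most $C_0 D/\epsilon^4$ Riesz pieces, and use pigeonhole to extract a block of size at least $\#B\,\epsilon^4/C_0$. The only additions you make---the explicit check that $\|\mathbf{e}_{R,\lambda}\|=1$ and the remark that the finite-dimensional Riesz sequence condition matches Definition~\ref{def2.1}---are harmless elaborations of the same proof.
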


\begin{proof}
Given an integer expanding matrix $R$, let $\overline{L}$ be a complete set of distinct representatives of ${\mathbb Z}^d/R^{\mathtt T}({\mathbb Z}^d)$. By Lemma \ref{lemma_tight}, we have that 
$$
\sum_{\lambda\in\overline{L}} \left|\langle{\bf w},{\bf e}_{R,\lambda}  \rangle \right|^2 =  \frac{|\det (R)|}{\#B} \|{\bf w}\|^2
$$
for all ${\bf w}\in{\mathbb C}^{\#B}$.  By Theorem \ref{thR}, one can find a partition $\{I_1,...,I_r\}$ such that each $\{{\bf e}_{R,\lambda}\}_{\lambda\in I_j}$, $j=1,...,r$ is a Riesz sequence with bounds $1-\epsilon$, $1+\epsilon$. Note that at least one of the $I_j$ must contain at least $|\det R|/r$ elements. As $r\le C_0(D/\epsilon^4)$ where $D = |\det(R)|/\#B$, there exists $j$ such that 
$$
\#I_j\ge \frac{\#B\epsilon^4}{C_0}.
$$
Take $L = I_j$ and this completes the proof. 
\end{proof}

\begin{remark}
Indeed, we do not need the full strength of the Kadison-Singer problem (as in the $R_\epsilon$ conjecture above) to prove Lemma \ref{lemma3.2}. It can be also obtained as a consequence of the Bourgain-Tzafriri restricted invertibility theorem \cite{MR890420}, see also \cite[Proposition 4.4]{MR2500595} .  We would like to thank Peter Casazza for pointing this fact to us. 
\end{remark}

\begin{theorem}\label{theorem_Riesz}
Let $\{(R_j,B_j)\}$ be a sequence of pairs with integral expanding matrix $R_j$ and $B_j$  a finite subset containing some distinct representatives (mod $R_j({\mathbb Z}^d)$). Suppose also that the associated fractal measure  
$$
\mu = \delta_{{\bf R}_1^{-1}B_1}\ast\delta_{{\bf R}_2^{-1}B_2}\ast...
$$
has the no-overlap condition. Then $\mu$ admits an exponential Riesz sequence of infinite cardinality.
\end{theorem}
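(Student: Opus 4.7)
My plan is to construct a Riesz sequence tower $\{(\tilde R_k, \tilde B_k, \tilde L_k)\}_{k\ge 1}$ for the measure $\mu$ and then invoke Theorem \ref{thm1.3}(ii). The workhorse for producing the $\tilde L_k$ will be Lemma \ref{lemma3.2}. The hard part is the interplay between its two guarantees: $\#\tilde L_k \ge \#\tilde B_k\,\epsilon_k^4/C_0$ with Riesz bounds $1\pm \epsilon_k$. For the tower products $\prod(1\pm\epsilon_k)$ to lie in $(0,\infty)$ we must have $\epsilon_k\to 0$, and then keeping $\#\tilde L_k \ge 2$ (which is necessary for $\tilde\Lambda$ to be infinite) forces $\#\tilde B_k\to\infty$. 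Since the original $\#B_j$ may be bounded, I cannot apply Lemma \ref{lemma3.2} level by level; instead I would coalesce many consecutive original levels into a single super-level whose effective digit set is large.

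Discarding the degenerate case in which only finitely many $\#B_j\ge 2$ (where $L^2(\mu)$ is finite-dimensional and no infinite Riesz sequence exists), I would pick indices $0=n_0<n_1<n_2<\cdots$ so that
\[
\prod_{j=n_{k-1}+1}^{n_k}\#B_j \;\ge\; 2C_0\cdot 16^k\qquad\text{for every }k\ge 1,
\]
and set $\tilde R_k := R_{n_k}R_{n_k-1}\cdots R_{n_{k-1}+1}$ together with
\[
\tilde B_k := R_{n_k}\cdots R_{n_{k-1}+2}(B_{n_{k-1}+1}) + \cdots + R_{n_k}(B_{n_k-1}) + B_{n_k}.
\]
A short induction, mirroring how distinct representatives propagate across a single step, shows that $\tilde B_k$ consists of distinct representatives mod $\tilde R_k({\mathbb Z}^d)$ with $\#\tilde B_k = \prod_{j=n_{k-1}+1}^{n_k}\#B_j$. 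Telescoping yields $\tilde R_K\cdots\tilde R_1 = {\bf R}_{n_K}$ and $\tilde R_K\cdots\tilde R_2(\tilde B_1)+\cdots+\tilde B_K = {\bf B}_{n_K}$, so the infinite convolution through the super-levels still reproduces $\mu$, and the no-overlap condition for the super-tower is just the no-overlap condition for $\mu$ at the sub-levels $n_K$.

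Next I would apply Lemma \ref{lemma3.2} with $\epsilon_k = 2^{-k}$ to each $(\tilde R_k,\tilde B_k)$, obtaining $\tilde L_k$ for which $(\tilde R_k,\tilde B_k,\tilde L_k)$ forms a Riesz sequence triple with bounds $1-\epsilon_k,\,1+\epsilon_k$ and
\[
\#\tilde L_k \;\ge\; \frac{\#\tilde B_k\,\epsilon_k^4}{C_0}\;\ge\; \frac{2C_0\cdot 16^k\cdot 2^{-4k}}{C_0}\;=\;2.
\]
Since $\sum_k\epsilon_k<\infty$, both $\prod_k(1-\epsilon_k)$ and $\prod_k(1+\epsilon_k)$ lie in $(0,\infty)$, so $\{(\tilde R_k,\tilde B_k,\tilde L_k)\}$ is a bona fide Riesz sequence tower. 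Theorem \ref{thm1.3}(ii) then delivers a Riesz sequence $E(\tilde\Lambda)$ for $L^2(\mu)$, where
\[
\tilde\Lambda \;=\; \bigcup_{K\ge 1}\bigl(\tilde L_1+\tilde R_1^{\mathtt T}\tilde L_2+\cdots+\tilde R_1^{\mathtt T}\cdots\tilde R_{K-1}^{\mathtt T}\tilde L_K\bigr).
\]
Because Lemma \ref{lemma3.2} selects each $\tilde L_k$ inside a complete set of residues modulo $\tilde R_k^{\mathtt T}({\mathbb Z}^d)$, a routine induction shows the $K$th sumset contains exactly $\prod_{k=1}^K\#\tilde L_k\ge 2^K$ distinct elements; hence $\tilde\Lambda$ is infinite, completing the argument.
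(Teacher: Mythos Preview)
Your argument is correct and follows essentially the same approach as the paper: regroup consecutive factors into super-levels with large digit sets, apply Lemma~\ref{lemma3.2} with a summable sequence $(\epsilon_k)$ to obtain Riesz sequence triples with $\#\tilde L_k\ge 2$, and invoke Theorem~\ref{thm1.3}(ii). Your version is in fact somewhat more carefully organized---you make the regrouping threshold and the choice $\epsilon_k=2^{-k}$ explicit, you flag the degenerate case where only finitely many $\#B_j\ge 2$ (which the paper tacitly excludes by writing ``As each $\#B_n\ge 2$''), and you justify explicitly that $\tilde\Lambda$ is infinite---but the core ideas coincide.
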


\begin{proof}
First, we note that we can group any $n_j$ consecutive factors without changing the resulting measure $\mu$. 
$$
\begin{aligned}
\mu =& \left(\delta_{{\bf R}_1^{-1}B_1}\ast...\ast\delta_{{\bf R}_{n_1}^{-1}B_{n_1}}\right)\ast\left(\delta_{{\bf R}_{n_1+1}^{-1}B_{n_1+1}}\ast...\ast\delta_{{\bf R}_{n_1+n_2}^{-1}B_{n_2}}\right)\ast...\\
=&\delta_{{\bf R}_{n_1}^{-1}{\bf B}_{n_1}}\ast\delta_{{\bf R}_{n_1+n_2}^{-1}{\bf B}_{n_2}}\ast....
\end{aligned}
$$
 As each $\#B_n\ge 2$,  $\#{\bf B}_n\ge 2^n$. Using this regrouping, we can assume that each $\#B_n\ge 2^n$. For $n$ large enough, we let $1>\epsilon_n> \left(\frac{C_0}{\#B_n}\right)^{1/4}$. As $\#B_n\ge 2^n$, one can also find sequence $\epsilon_n$ that is summable. By Lemma \ref{lemma3.2}, we can find $L_n$ such that $(R_n,B_n,L_n)$ forms a Riesz-sequence triple and 
$\#L_n \ge  \frac{\#B_n\epsilon_n^4}{C_0}>1$. 
Since the sequence $\epsilon_n$ is summable, we can use Theorem \ref{thm1.3}(ii) and  conclude that $\Lambda$ forms a Riesz sequence for $\mu$. Since each $\#L_n>1$, $\Lambda$ is an infinite set. 
\medskip
\end{proof}

\medskip

\subsection{Riesz sequence with optimal Beurling dimension} In this section, we focus on self-affine measures.
\begin{definition}\label{defifs}
For a given expansive $d\times d$ integer matrix $R$ and a finite set of integer vectors $B$ with $\#B$, we define the {\it affine iterated function system} (IFS)
 $$\tau_b(x) = R^{-1}(x+b),\quad ( x\in \br^d, b\in B).$$ The {\it self-affine measure} associated to $R$ and $B$ is the unique probability measure $\mu = \mu(R,B)$ satisfying
\begin{equation}\label{self-affine}
\mu(E) =\frac{1}{\#B} \sum_{b\in B}  \mu (\tau_b^{-1} (E)),\mbox{ for all Borel subsets $E$ of $\br^d$.}
\end{equation}

This measure is supported on the {\it attractor}  $T(R,B)$ which is the unique compact set that satisfies
$$
T(R,B)= \bigcup_{b\in B} \tau_b(T(R,B)).
$$
The set $T(R,B)$ is also called the {\it self-affine set} associated with the IFS. It can also be described as 
$$T(R,B)=\left\{\sum_{k=1}^\infty R^{-k}b_k : b_k\in B\right\}.$$
If $R=\rho O$ where $|\rho|<1$ and $O$ is an orthogonal matrix, then $\mu(R,B)$ is called a self-similar measure.
 One can refer to \cite{Hut81} and \cite{Fal97} for a detailed exposition of the theory of iterated function systems.
\end{definition}

It is known that self-affine measures can be realized as an infinite convolution product
$$
\mu(R,B) = \delta_{R^{-1}B}\ast\delta_{R^{-2}B}\ast.....
$$
Therefore, they fit into the category of measures that we have considered in the previous sections. By factorization of any $n_k$ consecutive factors, we can also write it as 
$$
\mu(R,B) = \delta_{R^{-{n_1}}B_{n_1}}\ast\delta_{R^{-{(n_1+n_2)}}B_{n_2}}\ast.....
$$
It is known \cite[Section 2]{DHL18} that $\mu(R,B)$ satisfies the no-overlap condition if the digits in $B$ are chosen from distinct coset representatives in ${\mathbb Z}^d/R({\mathbb Z}^d)$.
\begin{definition}
Let $\Lambda$ be a countable set on ${\mathbb R}^d$. The $\alpha$-Beurling density is defined to be 
$$
D_{\alpha}^{+}(\Lambda) =  \limsup_{h\rightarrow\infty}\sup_{x \in {\mathbb R}^d} \frac{\#(\Lambda\cap B(x,h))}{h^{\alpha}}, \ B(x,h)= \{y: |y-x|<h\}
$$
and the Beurling dimension of $\Lambda$ is defined to be 
$$
\dim_B(\Lambda) = \sup \{\alpha: D^{\alpha}(\Lambda)>0\}.
$$
\end{definition}

\begin{theorem} \label{thmDHSW} \cite{DHSW11,MR2826404}
{\upshape (i)} Let $\mu(R,B)$ be the self-similar measure with $R = \rho O$ as in Definition \ref{defifs}, and let $E(\Lambda)$ be a Bessel sequence of $\mu(R,B)$. Then the Beurling dimension of $\Lambda$ is at most $\log_{\rho}(\#B)$.

\medskip

{\upshape (ii)} Let $\mu(R,B)$ be the self-similar measure. Then there exists a Bessel sequence $E(\Lambda)$ of positive Beurling dimension.
\end{theorem}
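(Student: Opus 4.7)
The plan is to attack the two parts separately. For part~(i) the natural strategy is to combine Bessel's inequality at one scale with a self-similar rescaling argument. Pick a nonzero $g\in L^{2}(\mu)$ whose measure-Fourier transform $\widehat{g\mu}(\xi)=\int g(x)e^{-2\pi i\langle \xi,x\rangle}\,d\mu(x)$ is continuous with $\widehat{g\mu}(0)\neq 0$, so that there exist $\delta,c>0$ with $|\widehat{g\mu}(\xi)|\geq c$ on $B(0,\delta)$. The modulations $g_{x}(t)=g(t)e^{2\pi i\langle x,t\rangle}$ satisfy $\widehat{g_{x}\mu}(\lambda)=\widehat{g\mu}(\lambda-x)$, so Bessel's inequality applied to $g_{x}$ immediately gives $\#(\Lambda\cap B(x,\delta))\leq D\,\|g\|_{L^{2}(\mu)}^{2}/c^{2}$, uniformly in $x\in\br^{d}$, but only at the single scale $\delta$.

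The next step is to upgrade this to the dilated scale $\rho^{n}\delta$ using the self-similar structure. Iterating the scaling relation $\mu=\frac{1}{(\#B)^{n}}\sum_{{\bf b}\in{\bf B}_{n}}\mu\circ\tau_{\bf b}^{-1}$ and invoking the no-overlap condition, one sees that the pushforward of the restriction of $\mu$ to any $n$-th level IFS tile is a translate of $(\#B)^{-n}\mu\circ R^{n}$. Replacing $g$ by $g\circ R^{n}$ and running the modulation argument now with the dilated spectrum $(R^{\mathtt T})^{n}\Lambda$ against a rescaled test function shows that a Bessel sequence for $\mu$ with bound $D$ yields a Bessel-type bound $D(\#B)^{n}$ at the rescaled level, producing
$$
\#\bigl(\Lambda\cap B(x,\rho^{n}\delta)\bigr)\leq \frac{D\,(\#B)^{n}\,\|g\|_{L^{2}(\mu)}^{2}}{c^{2}},
$$
uniformly in $x$. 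Writing $h=\rho^{n}\delta$ gives $\#(\Lambda\cap B(x,h))\leq K\,h^{\log_{\rho}\#B}$ for a constant $K$, so $D_{\alpha}^{+}(\Lambda)=0$ whenever $\alpha>\log_{\rho}\#B$, whence $\dim_{B}(\Lambda)\leq \log_{\rho}\#B$ as claimed.

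For part~(ii) I would appeal directly to the tower construction developed earlier in this paper. Grouping the factors of $\mu(R,B)=\delta_{R^{-1}B}\ast\delta_{R^{-2}B}\ast\cdots$ into blocks of size $n$ gives an infinite convolution whose base digit set ${\bf B}_{n}$ has $(\#B)^{n}$ elements (still distinct representatives mod $R^{n}(\bz^{d})$, so the no-overlap condition persists). Fix $\epsilon>0$ small and apply Lemma~\ref{lemma3.2} to find $L_{n}\subset\bz^{d}$ with $\#L_{n}\geq (\#B)^{n}\epsilon^{4}/C_{0}$ such that $(R^{n},{\bf B}_{n},L_{n})$ is a Riesz-sequence triple with bounds $1\pm\epsilon$. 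Concatenating these triples across successive blocks yields a Riesz-sequence tower, and Theorem~\ref{thm1.3}(ii) then produces an exponential Riesz sequence $E(\Lambda)$ for $L^{2}(\mu)$, which is automatically Bessel. A direct ball count using that at the $k$-th block level $\Lambda$ contains at least $(\#L_{n})^{k}$ points inside a ball of radius $\leq C\rho^{nk}$ shows that $\Lambda$ has Beurling dimension at least $\log_{\rho^{n}}\#L_{n}>0$.

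The main obstacle is the scale-change step in part~(i): precisely formulating how the Bessel bound $D$ for $\mu$ transforms into the bound $D(\#B)^{n}$ for the rescaled measure and the dilated spectrum, and justifying the use of $g\circ R^{n}$ as the test function. The factor $(\#B)^{n}$ is forced because each $n$-th level tile carries $\mu$-mass exactly $(\#B)^{-n}$, so renormalizing to compare with $\mu$ at the original scale introduces this multiplier. Once this scaling law is in hand, the density estimate and the final dimension bound follow formally. Part~(ii) is essentially constructive via the Kadison--Singer based machinery of Section~\ref{Section_KS}, with the only subtlety being the simultaneous choice of $\epsilon$ and block-size $n$ guaranteeing both convergence of the Riesz-sequence tower and $\#L_{n}\geq 2$ so that the resulting spectrum is infinite of positive dimension.
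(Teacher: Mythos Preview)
The paper does not prove this theorem; it is quoted from the cited references \cite{DHSW11,MR2826404} and stated without proof, so there is no ``paper's own proof'' to compare against. Your sketch for part~(i) follows the standard line of argument in those references (Bessel inequality plus modulation to count points at a fixed scale, then self-similar rescaling to propagate to all scales), and modulo the scaling step you yourself flag as the main obstacle, it is the right outline.

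Part~(ii), however, has a genuine gap as written. You fix a single $\epsilon>0$ and a single block size $n$, obtain one Riesz-sequence triple $(R^{n},{\bf B}_{n},L_{n})$ with bounds $1\pm\epsilon$, and then ``concatenate these triples across successive blocks.'' But repeating the same triple at every level gives a tower with $C_{j}\equiv 1-\epsilon$ and $D_{j}\equiv 1+\epsilon$, so $\prod_{j}C_{j}=0$ and $\prod_{j}D_{j}=\infty$, violating the hypotheses of Definition~\ref{def2.1} and Theorem~\ref{thm1.3}(ii). Your closing remark about ``the simultaneous choice of $\epsilon$ and block-size $n$'' does not fix this: no single pair $(\epsilon,n)$ can make those infinite products converge. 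The correct move, if you want to use the tower machinery of this paper, is to let $\epsilon_{k}\to 0$ summably and let the block sizes $n_{k}$ grow so that Lemma~\ref{lemma3.2} still yields $\#L_{n_{k}}\geq 2$; this is exactly what the paper does in the proof of Theorem~\ref{th5.7}, which in fact establishes the stronger conclusion of maximal (not merely positive) Beurling dimension. Alternatively, note that the cited source \cite{MR2826404} proves~(ii) by a direct construction of a \emph{Bessel} sequence that does not require Kadison--Singer at all; your route via Riesz sequences is strictly heavier machinery and, as stated, does not go through.
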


We are going to prove the following theorem:

\begin{theorem} \label{th5.7}
Let $\mu(R,B)$ be the self-similar measure with $R = \rho O$ and assume that $\mu(R,B)$ satisfies the no-overlap condition. Then there exists a Riesz sequence $E(\Lambda)$ of Beurling dimension $\log_{\rho}(\#B)$ for $\mu(R,B)$.
\end{theorem}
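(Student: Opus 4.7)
The plan is to refine the Kadison--Singer construction used in the proof of Theorem \ref{theorem_Riesz} by choosing the grouping sizes $n_j$ and the tolerances $\epsilon_j$ carefully enough to make the resulting Riesz sequence as dense as theoretically possible. The upper bound $\dim_B(\Lambda) \le \log_\rho(\#B)$ comes for free from Theorem \ref{thmDHSW}(i), since every Riesz sequence is a Bessel sequence, so the entire task is to realize this dimension from below.

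First I would fix a strictly increasing sequence $n_1,n_2,\ldots$ of positive integers with partial sums $S_N := n_1 + \cdots + n_N$ (the growth rate is pinned down at the end) and regroup the self-similar convolution as
$$
\mu(R,B) = \delta_{R^{-S_1}\widetilde B_{n_1}} \ast \delta_{R^{-S_2}\widetilde B_{n_2}} \ast \cdots,
$$
where $\widetilde B_{n_j} = B + RB + \cdots + R^{n_j-1}B$ has $(\#B)^{n_j}$ elements, all distinct modulo $R^{n_j}(\mathbb Z^d)$ by the standard radix argument. Next, for a summable sequence $\epsilon_j \in (0,1)$ to be chosen, I apply Lemma \ref{lemma3.2} to each pair $(R^{n_j}, \widetilde B_{n_j})$, producing $L_{n_j}$ inside a complete coset representative set $\overline L_{n_j}$ for $\mathbb Z^d/(R^{n_j})^T\mathbb Z^d$ with
$$
\#L_{n_j}\ \ge\ \frac{(\#B)^{n_j}\epsilon_j^{4}}{C_0},
$$
such that $(R^{n_j},\widetilde B_{n_j},L_{n_j})$ is a Riesz sequence triple with bounds $1-\epsilon_j,\,1+\epsilon_j$. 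Since $\sum\epsilon_j < \infty$ forces both $\prod(1-\epsilon_j)>0$ and $\prod(1+\epsilon_j)<\infty$, Theorem \ref{thm1.3}(ii) then yields a Riesz sequence $E(\Lambda)$ for $L^2(\mu)$, where $\Lambda = \bigcup_N \Lambda_N$ and $\Lambda_N = L_{n_1}+(R^{n_1})^T L_{n_2}+\cdots+(R^{S_{N-1}})^T L_{n_N}$.

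For the Beurling density, I first verify $\#\Lambda_N = \prod_{j=1}^N \#L_{n_j}$ by telescoping: reducing any $\lambda\in\Lambda_N$ modulo $(R^{n_1})^T\mathbb Z^d$ recovers $\ell_1\in L_{n_1}\subset\overline L_{n_1}$ uniquely, and iterating peels off each $\ell_j$. Since $R=\rho O$ with $O$ orthogonal, choosing each $\overline L_{n_j}$ inside a bounded fundamental domain of $(R^{n_j})^T\mathbb Z^d$ gives $\|\ell_j\|\le K\rho^{n_j}$ with $K$ independent of $j$, hence $\|(R^{S_{j-1}})^T\ell_j\|=\rho^{S_{j-1}}\|\ell_j\|\le K\rho^{S_j}$, and geometric summation yields $\Lambda_N \subset B(0,K'\rho^{S_N})$ for some $K'$. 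Then, for any $\alpha<\log_\rho(\#B)$, setting $\delta:=\log_\rho(\#B)-\alpha>0$,
$$
\frac{\#\Lambda_N}{(K'\rho^{S_N})^{\alpha}} \ \ge\ \frac{(\#B)^{S_N}\prod_{j=1}^N\epsilon_j^{4}}{(K')^{\alpha}\,C_0^{N}\,\rho^{\alpha S_N}} \ =\ \frac{\rho^{\delta S_N}}{(K')^{\alpha}}\cdot\frac{\prod_{j=1}^N\epsilon_j^{4}}{C_0^{N}}.
$$
Taking for instance $n_j=2^j$ and $\epsilon_j=1/j^2$, the factor $\rho^{\delta S_N}$ grows doubly exponentially in $N$ (as $S_N\ge 2^N$), while $C_0^N/\prod_j\epsilon_j^{4}\le C_0^N(N!)^8$ grows only singly exponentially in $N\log N$; so the right side tends to $\infty$ for every such $\alpha$, which gives $D^+_{\alpha}(\Lambda)=\infty$ and therefore $\dim_B(\Lambda)\ge\log_\rho(\#B)$.

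The hardest part is the three-way tension among the summability of $\{\epsilon_j\}$ (needed for the Riesz tower to have positive product bounds), the $\epsilon_j^{4}$ loss in Lemma \ref{lemma3.2}, and the $\rho^{\alpha S_N}$ denominator in the Beurling density. Slowly growing $n_j$ cannot absorb the $C_0^{N}/\prod\epsilon_j^{4}$ losses; the whole point of the argument is that choosing $n_j$ to grow super-polynomially forces $S_N$ to grow fast enough that $\rho^{\delta S_N}$ overwhelms every other factor for \emph{every} $\delta>0$ simultaneously, so a single $\Lambda$ realizes the full dimension $\log_\rho(\#B)$.
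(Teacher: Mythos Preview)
Your proof is correct and follows essentially the same strategy as the paper: regroup the convolution into blocks, apply Lemma~\ref{lemma3.2} at each level to build a Riesz-sequence tower, and then estimate the Beurling density of the resulting $\Lambda$ along the scales $\rho^{S_N}$. The only real difference is in the parameter choices. The paper takes $n_k=k$ and $\epsilon_k=(C_0/k^5)^{1/4}$, which gives $S_N=N(N+1)/2$ and $\#L_{n_k}\ge(\#B)^{k}/k^5$; the density estimate then pits $\rho^{\eta N^2/2}$ against $(N!)^5$, and the former wins. You instead take $n_j=2^j$ and $\epsilon_j=1/j^2$, making $S_N$ grow exponentially so that $\rho^{\delta S_N}$ overwhelms $C_0^N(N!)^8$ with enormous room to spare. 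Both work; the paper's choice is a bit tighter and shows that your closing heuristic (``slowly growing $n_j$ cannot absorb the losses'') is actually false---linear $n_j$ already suffices, since quadratic $S_N$ beats any factorial-type loss. This does not affect the validity of your argument, only the sharpness of the commentary.
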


\begin{proof}
By regrouping, $\mu(R,B)$ is generated by the tower $(R^{n_k}, B_{n_k})$, where $\#B_{n_k} = (\#B)^{n_k}$.  Let $\epsilon_k = \left(\frac{C_0}{n_k^5}\right)^{1/4}$ and by Lemma \ref{lemma3.2}, we can find $L_{n_k}$ such that
$(R^{n_k}, B_{n_k}, L_{n_k})$ forms a Riesz sequence triple and 
\begin{equation}
\#L_{n_k}\ge \frac{(\#B)^{n_k}}{n_k^5}.
\label{eqdl}
\end{equation}

We now take $n_k = k$ 
so that $\sum \epsilon_k<\infty$ and then we obtain a Riesz sequence tower $(R^{n_k}, B_{n_k}, L_{n_k})$ and hence $\mu(R,B)$ admits a Riesz sequence $E(\Lambda)$, with 
$$
\Lambda = \bigcup_{k=1}^{\infty}{\bf L_k}, \ {\bf L}_{k}: =  L_{n_1}+(R^{\mathtt T})^{n_1} L_{n_2}+(R^{\mathtt T})^{n_1+n_2} L_{n_2}+...+(R^{\mathtt T})^{n_1+...+n_{k-1}} L_{n_k}
$$
by Theorem \ref{thm1.3}(ii). It remains to show that the Beurling dimension of $\Lambda$ is $\log_{\rho}(\#B)$.

\medskip

Let $Q_n = (R^{\mathtt T})^n(\cj B_{\sqrt{d}}(0))=\cj B_{\rho^n\sqrt{d}}(0)$ for $n=1,2,...$, note that $B_{\sqrt d}(0)$ contains the cube $[-1/2,1/2]^d$. We note also that $L_{n_i}$ can be chosen to be inside $Q_{n_i}$, by reducing $(\mod R^{\mathtt T})^{n_i}$, and then ${\bf L}_{k}$ is inside $Q_{n_1+...+n_k+1}$. For any ${\bf x}\in {\bf L}_k$, 
$$
|{\bf x}|\le   \sqrt{d}\rho^{n_1+...+n_k+1}.
$$
Therefore, letting $h_k = \sqrt{d}\rho^{n_1+...+n_k+1} =\sqrt{d}\rho^{k(k+1)/2+1}$ (since $n_k = k$), and using \eqref{eqdl}, we have 
$$
\#(\Lambda\cap B_{h_k}(0)) \ge \prod_{i=1}^k\#L_{n_i}\ge \prod_{i=1}^k\frac{(\#B)^{n_i}}{n_i^5} = \frac{(\#B)^{k(k+1)/2}}{(k!)^5}
$$
Let $\alpha = \log_{\rho}(\#B)$. For any $\eta>0$, 
$$
\begin{aligned}
\frac{\#(\Lambda\cap B_{h_k}(0))}{h_k^{\alpha-\eta}} \ge& \frac{\frac{(\#B)^{k(k+1)/2}}{(k!)^5}}{d^{(\alpha-\eta)/2}(\#B)^{k(k+1)/2+1}\rho^{-\eta (k(k+1)/2+1)}}\\
\ge&\frac{1}{d^{(\alpha-\eta)/2}} \cdot \frac{\rho^{\eta k(k+1)/2}}{k^{5k}}, \ \\
\ge&\frac{1}{d^{(\alpha-\eta)/2}} \cdot e^{ck^2-5k\ln k} \rightarrow +\infty, \ (\mbox{let} \ c = \ln(\rho^{\eta/2})>0)\\
\end{aligned}
$$
since $\lim_{k\rightarrow \infty}(ck^2-5k\ln k) = +\infty$. This shows that $D^{+}_{\alpha-\eta}(\Lambda) = \infty$ for all $\eta>0$. Hence, dim$_B(\Lambda)\ge \log_{\rho}(\#B)$. Hence, together with Theorem \ref{thmDHSW}, we have dim$_B(\Lambda)= \log_{\rho}(\#B)$. This completes the proof. 

\end{proof}

\smallskip

\begin{acknowledgements}
The authors would like to thank the referee for his/her valuable suggestion. They would also like to thank the referee and professor Peter  Casazza for pointing out how Lemma \ref{lemma3.2} can also be deduced from a weaker version of the Bourgain-Tzafriri theorem. This work was partially supported by a grant from the Simons Foundation (\#228539 to Dorin Dutkay).
\end{acknowledgements}

\bibliographystyle{alpha}
\bibliography{eframes}
\end{document}